\documentclass[]{article}

\headheight=6.15pt
\textheight=8.75in
\textwidth=6.5in
\oddsidemargin=0in
\evensidemargin=0in
\topmargin=0in

\pagestyle{plain}

\usepackage{amsmath}

\usepackage{amssymb}
\usepackage{amsfonts}
\usepackage{amsthm}
\usepackage{mathtools}

\newtheorem{theorem}{Theorem}[section]
\newtheorem{proposition}[theorem]{Proposition}
\newtheorem{lemma}[theorem]{Lemma}
\newtheorem{corollary}[theorem]{Corollary}
\newtheorem{question}[theorem]{Question}

\theoremstyle{remark}
\newtheorem*{remark}{Remark}
\theoremstyle{definition}
\newtheorem{definition}[theorem]{Definition}

\numberwithin{equation}{section}

\DeclarePairedDelimiter\abs{\lvert}{\rvert}
\DeclarePairedDelimiter\norm{\lVert}{\rVert}
\DeclarePairedDelimiter\alsc{\langle}{\rangle_\alpha}
\DeclarePairedDelimiter\alscn{\langle}{\rangle_{\alpha, n}}

\newcommand{\E}{\mathbb{E}}
\newcommand{\Y}{\mathbb{Y}}
\renewcommand{\P}{\mathbb{P}}
\newcommand{\J}[1]{J_{#1}^{(\alpha)}}
\newcommand{\A}{\mathcal{A}^\alpha_\lambda(n)}
\newcommand{\R}{\mathbb{R}}
\newcommand{\N}{\mathbb{N}}
\newcommand{\Z}{\mathbb{Z}}
\newcommand{\C}{\mathbb{C}}
\newcommand{\T}{\mathbb{T}}
\newcommand{\I}{\mathbb{I}}

\newcommand{\normHT}[2]{\norm{#2}_{\dot{H}^{#1}(\T)}}
\newcommand{\normHR}[2]{\norm{#2}_{\dot{H}^{#1}(\R)}}

\newcommand{\cbee}[1]{\E_{#1}^n}
\DeclareMathOperator{\Conf}{Conf}

\newcommand\blfootnote[1]{%
	\begingroup
	\renewcommand\thefootnote{}\footnote{#1}%
	\addtocounter{footnote}{-1}%
	\endgroup
}
\title{Gessel-Type Expansion for the Circular $\beta$-Ensemble and Central Limit Theorem for the Sine-$\beta$ Process for $\beta\le 2$}
\author{Sergei M. Gorbunov${}^*$
\blfootnote{Institute for System Programming of the Russian Academy of Sciences, Moscow, Russia}
\blfootnote{Moscow Institute of Physics and Technology, Dolgoprudny, Moscow Region, Russia}
\blfootnote{Steklov Mathematical Institute of Russian Academy of Sciences, Moscow, Russia}}
\date{}
\begin{document}
\maketitle
\begin{abstract}	
We obtain a Gessel-type expansion in Jack polynomials for the expectations of multiplicative functionals in the circular $\beta$-ensemble. As a consequence, we establish a Szegő-type limit theorem for all $H^{1/2}(\T)$ functions when $\beta \le 2$, together with an explicit rate of convergence for functions from $H^1(\T)$. The estimate is stable under the scaling limit to the sine-$\beta$ process and yields a Soshnikov-type central limit theorem for the sine-$\beta$ process in the full $H^{1/2}(\R)$ class.\end{abstract}
\section{Introduction}
\subsection{The Gessel Theorem for the circular $\beta$-ensemble}
The circular $\beta$-ensemble is the probability measure on $n$-point configurations on the unit circle defined by
\[
d\P_{\beta}^n(\theta_1, \ldots, \theta_n) = Z_{n, \beta}^{-1}\prod_{1\le l< m\le n}\abs{e^{i\theta_l}-e^{i\theta_m}}^\beta\prod_{k=1}^nd\theta_k, \quad \theta_j\in(-\pi, \pi),
\]
where $Z_{n,\beta}$ is a normalization constant.

For $\beta=2$, this ensemble coincides with the circular unitary ensemble, that is, the eigenvalue distribution of a Haar-distributed unitary matrix.

A classical result of Ira Gessel establishes a representation of multiplicative functionals in the circular unitary ensemble in terms of symmetric functions. Specifically, expectations of multiplicative functionals admit an expansion in Schur polynomials $\{s_\lambda\}_\lambda\subset \Lambda$ indexed by partitions $\lambda$. We refer to Subsection~\ref{sec:jack} for notation and background on symmetric functions.

We say that a function $f$ on the unit circle is $1/2$-Sobolev regular if
\[
\sum_{j\in\Z}\abs{j}\abs{\hat{f}_j}^2<+\infty, \quad f(\theta)=\sum_{j\in\Z} \hat{f}_j e^{ij\theta}.
\]

\begin{theorem}[{The Gessel Theorem \cite[Theorem 16]{G_90}, \cite[II.2]{TW_01}}]\label{introduction:gessel}
If $f$ is $1/2$-Sobolev regular, then
\[
e^{-n\hat{f}_0}\E_2^n\left\{\prod_{j=1}^ne^{f(\theta_j)}\right\} = \sum_{\lambda:l(\lambda)\le n}s_\lambda(\rho_+)s_\lambda(\rho_-),
\]
where the algebra homomorphisms $\rho_\pm:\Lambda\to\C$ are determined by their values on the Newton power sums
\[
p_j(\rho_\pm) = j\hat{f}_{\pm j}, \quad p_j(x)=\sum_{l\in\N}x_l^j\in \Lambda.
\]
\end{theorem}

This identity may be deduced from the Szegő–Heine theorem \cite[Theorem 1.5.13]{S_05}, which expresses the expectation as a Toeplitz determinant, combined with the Cauchy–Binet formula and the Jacobi–Trudi identity. See Tracy and Widom \cite[II.2]{TW_01} for this derivation.

Following Tracy and Widom \cite[II.3]{TW_01}, Borodin and Okounkov \cite[Section 3]{BO_00}, the Szegő limit theorem follows immediately from Theorem~\ref{introduction:gessel}. Applying $\rho_\pm$ to the Cauchy identity
\[
\sum_\lambda s_\lambda(x)s_\lambda(y) = \exp\left(\sum_{k\ge 1}\frac{p_k(x)p_k(y)}{k}\right),
\]
one obtains
\[
e^{-n\hat{f}_0}\E_2^n\left\{\prod_{j=1}^ne^{f(\theta_j)}\right\}  \to e^{\sum_{k\ge 1}k\hat{f}_k\hat{f}_{-k}}, \quad \text{ as }n\to\infty.
\]

The above argument relies crucially on the determinantal structure of the circular unitary ensemble. For general $\beta$, no such structure is available. However, Schur polynomials admit a one-parameter deformation: the Jack polynomials $J_\lambda^{(\alpha)}$ introduced by Henry Jack \cite{J_70}. These are indexed by partitions $\lambda$ and depend on a positive parameter $\alpha$. See Theorem \ref{Jack:def} for their definition due to Macdonald \cite{M_79}, and Section \ref{sec:jack} for their properties.

Macdonald showed \cite[(10.36)]{M_79} that Jack polynomials are orthogonal with respect to the scalar product induced by the circular $\beta$-ensemble with $\beta=2/\alpha$:
\begin{equation}\label{intro_eq:jack_orthogonality}
  \E_{2/\alpha}^n\J{\lambda}(e^{i\theta_1}, \ldots, e^{i\theta_n})\overline{\J{\mu}(e^{i\theta_1}, \ldots, e^{i\theta_n})} = 0, \quad \text{ for }\lambda\ne \mu.
\end{equation}
For $\alpha=1$, the Jack polynomials reduce to Schur polynomials and the orthogonality follows from the Andr\'eief identity \cite{A_83}.

Moreover, the Jack polynomials satisfy the Cauchy identity
\begin{equation}\label{intro_eq:cauchy}
\sum_{\lambda\in\Y}\frac{\J{\lambda}(x)\J{\lambda}(y)}{\alsc{\J{\lambda}, \J{\lambda}}} = \exp\left(\frac{1}{\alpha}\sum_{j=1}^\infty \frac{p_j(x) p_j(y)}{j}\right).
\end{equation}
These two statements --- the orthogonality and the Cauchy identity --- are sufficient to extend Gessel’s expansion to arbitrary $\beta$.

\begin{theorem}\label{intro:gessel}
  Let $f$ be $1/2$-Sobolev regular and assume $\alpha\ge1$. Then
  \[
    \E_{2/\alpha}^n\left\{\prod_{j=1}^ne^{f(\theta_j)}\right\} = \sum_{\lambda:l(\lambda)\le n} \frac{\J{\lambda}(\rho_-)\J{\lambda}(\rho_+)}{\alsc{\J{\lambda}, \J{\lambda}}}\A,
\]
where
  \[
  p_j(\rho_\pm) = \alpha j\hat{f}_{\pm j},
  \]
  and
  \[
  \mathcal{A}_\lambda^\alpha(n) = \prod_{(i, j)\in \lambda}\left(1 - \frac{\alpha - 1}{n+j\alpha - i}\right).
  \]
  The corresponding infinite Jack-polynomial series converges absolutely.
\end{theorem}
\begin{remark}
The assumption $\alpha\ge1$ guarantees absolute convergence of the infinite Jack series; the identity remains valid formally as a power series in the Fourier coefficients of $f$.
\end{remark}

Theorem \ref{intro:gessel} is proved in Subsection \ref{ssec:cbe_th_proof}.

\subsection{Szeg\H{o}'s Limit Theorem for the circular $\beta$-ensemble}

Note that $\A\to 1$ as $n\to\infty$. Combining this convergence with the action of the homomorphisms $\rho_\pm$ on the Cauchy identity~\eqref{intro_eq:cauchy} for the Jack polynomials yields the following extension of the classical Szeg\H{o} theorem.
\begin{corollary}\label{res:cbe_th}
Let $f$ be a $1/2$-Sobolev regular function on the unit circle and assume $\beta \le 2$.
\begin{itemize}
\item The Laplace transform of the centered additive functional converges to that of a Gaussian random variable:
\[
e^{-n\hat{f}_0}\cbee{\beta} \left\{\prod_{j=1}^n e^{f(\theta_j)}\right\} \to e^{\frac{2}{\beta}\sum_{k= 1}^\infty k\hat{f}_k\hat{f}_{-k}}, \quad \text{ as }n\to\infty.
\]
\item If $f$ is real-valued, the centered additive functional satisfies a subgaussian exponential moment bound:
\begin{equation}\label{intro_eq:subgauss}
e^{-n\hat{f}_0}\cbee{\beta} \left\{\prod_{j=1}^n e^{f(\theta_j)}\right\}\le e^{\frac{2}{\beta}\sum_{k\ge 1}k\hat{f}_k\hat{f}_{-k}}.
\end{equation}
\end{itemize}
\end{corollary}

Corollary \ref{res:cbe_th} is proved in Subsection \ref{ssec:cbe_th_proof}.

Under a stronger regularity assumption $f\in C^{1+\varepsilon}$, $\varepsilon>0$, Corollary~\ref{res:cbe_th} was established by Johansson \cite[Remark~2.6]{J_98} for all $\beta>0$. His argument is based on the loop (fundamental) equation; see \cite[Section~3.1]{J_98} and \cite[Lemma~2.1]{L_21}. This method was subsequently developed by Lambert \cite{L_21}, who proved convergence at certain mesoscopic scales under a regularity assumption $f\in C^{3+\varepsilon}$, $\varepsilon>0$; see \cite[Theorem~1.1]{L_21}.

Below we state a quantitative version of Corollary~\ref{res:cbe_th} (Theorem~\ref{res:cbe_est}). The estimate holds in the microscopic regime and, therefore, implies convergence at arbitrarily small mesoscopic scales. For $\beta=2$, the convergence at arbitrarily small mesoscopic scales for Schwartz test functions was previously obtained by Soshnikov \cite[Theorem~1]{S_00}.

We emphasize that $1/2$-Sobolev regularity is necessary for the limiting expression in Corollary~\ref{res:cbe_th} to exist. In \cite[Remark~1.3]{L_21}, Lambert conjectured that this condition is sufficient if and only if $\beta \le 2$. Our result confirms the sufficiency for $\beta \le 2$.

The Jack-polynomial approach is not new. This paper is inspired by the work of Jiang and Matsumoto \cite{JM_15}. 
Jiang and Matsumoto studied joint moments of the Newton power sums $\{p_j\}_{j\in\mathbb N}$ in the circular $\beta$-ensemble. For general $\beta>0$ and a polynomial function $f$, they proved convergence of the additive functional $\sum_{j=1}^n f(\theta_j)$ \cite[Theorem 2]{JM_15}. Their results are particularly precise in the cases $\beta=1$ and $\beta=4$.

For $\beta=1$, Jiang and Matsumoto established the convergence for all $1/2$-Sobolev regular functions \cite[Theorem 3]{JM_15}. For $\beta=4$, they obtained \cite[Theorem 4]{JM_15} the convergence under the condition
\[
\sum_{j\in\Z}\abs{j}\ln(1+\abs{j})\abs{\hat{f}_j}^2<\infty.
\]
Using their estimates for $\beta=4$, Lambert \cite[Remark~1.3]{L_21} constructed a $1/2$-Sobolev regular function for which Corollary~\ref{res:cbe_th} fails.

We now turn to the rate of convergence. For a function $f$ on the unit circle, define a seminorm
\[
\normHT{p}{f}^2 = \sum_{k\in\Z}\abs{k}^{2p}\abs{\hat{f}_k}^2,
\]
and introduce the Sobolev space
\[
H^p(\T) = \{f: \normHT{p}{f}<+\infty\},
\] 
which becomes a Hilbert space with the norm
\[
\norm{\cdot}_{H^p(\T)}^2 = \normHT{p}{\cdot}^2+\norm{\cdot}_{L^2(\T)}^2.
\]
\begin{theorem}\label{res:cbe_est}
Let $f\in H^1(\mathbb T)$ and $\beta\le 2$. Then the inequality
\begin{equation}\label{res_eq:cbe_est}
\begin{aligned}
\abs*{\E_\beta^{2n} \left\{\prod_{j=1}^{2n} e^{f(\theta_j)}\right\}\exp\left(-2n\hat{f}_0-\frac{2}{\beta}\sum_{k\ge 1}k\hat{f}_k\hat{f}_{-k}\right)-1}&\\
 \le \frac{4}{\beta^2 n}\exp\left(\frac{1}{\beta}\normHT{1/2}{f}^2 - \frac{2}{\beta}\Re\sum_{k\ge 1}k\hat{f}_k\hat{f}_{-k}\right)&\normHT{1}{f}^2
 \end{aligned}
\end{equation}
holds.
\end{theorem}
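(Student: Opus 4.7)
The plan is to use Theorem~\ref{intro:gessel} together with the Cauchy identity~\eqref{intro_eq:cauchy} to express both sides of~\eqref{res_eq:cbe_est} as Jack series, and then bound their difference termwise. Setting $\alpha=2/\beta\ge 1$ and applying Theorem~\ref{intro:gessel} to the circular $\beta$-ensemble with $2n$ particles yields
\[
 e^{-2n\hat f_0}\E_\beta^{2n}\prod_{j=1}^{2n}e^{f(\theta_j)}=\sum_{l(\lambda)\le 2n}\frac{\J{\lambda}(\rho_+)\J{\lambda}(\rho_-)}{\alsc{\J{\lambda},\J{\lambda}}}\mathcal{A}^\alpha_\lambda(2n),
\]
while the Cauchy identity at $(\rho_+,\rho_-)$ represents the Gaussian exponential as $e^{\frac{2}{\beta}\sum_{k\ge 1}k\hat f_k\hat f_{-k}}=\sum_\lambda\J{\lambda}(\rho_+)\J{\lambda}(\rho_-)/\alsc{\J{\lambda},\J{\lambda}}$. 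Subtracting these, multiplying by the conjugated Gaussian exponential, and passing to moduli (which produces the $e^{-\frac{2}{\beta}\Re\sum k\hat f_k\hat f_{-k}}$ factor on the right-hand side) reduces the proof to the bound
\[
 T:=\sum_{l(\lambda)\le 2n}\frac{\abs{\J{\lambda}(\rho_+)\J{\lambda}(\rho_-)}}{\alsc{\J{\lambda},\J{\lambda}}}\bigl(1-\mathcal{A}^\alpha_\lambda(2n)\bigr)+\sum_{l(\lambda)>2n}\frac{\abs{\J{\lambda}(\rho_+)\J{\lambda}(\rho_-)}}{\alsc{\J{\lambda},\J{\lambda}}}\le\frac{8}{\beta^2 n}\normHT{1}{f}^2\,e^{\frac{1}{\beta}\normHT{1/2}{f}^2}.
\]

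For the weight $1-\mathcal{A}^\alpha_\lambda(2n)$ I use that when $\alpha\ge 1$ and $l(\lambda)\le 2n$ every factor of $\mathcal{A}^\alpha_\lambda(2n)$ lies in $[0,1]$. The inequality $1-\prod(1-x_i)\le\sum x_i$ combined with $2n+j\alpha-i\ge 2n-l(\lambda)+\alpha$ gives $0\le 1-\mathcal{A}^\alpha_\lambda(2n)\le(\alpha-1)|\lambda|/(2n-l(\lambda)+\alpha)$. I would then split by length: for $l(\lambda)\le n$ the right-hand side is at most $(\alpha-1)|\lambda|/n$; for $l(\lambda)>n$ I use $|\lambda|\ge l(\lambda)>n$ together with the trivial $|1-\mathcal{A}^\alpha_\lambda(2n)|\le 1\le|\lambda|/n$, and the same argument with $1\le|\lambda|/(2n)$ handles the tail sum $l(\lambda)>2n$. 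Combining the three regimes,
\[
 T\le\frac{\max(\alpha-1,1)}{n}\sum_\lambda|\lambda|\frac{\abs{\J{\lambda}(\rho_+)\J{\lambda}(\rho_-)}}{\alsc{\J{\lambda},\J{\lambda}}}.
\]

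The remaining $|\lambda|$-weighted sum is evaluated by differentiating a generating function. Since $\J{\lambda}$ is homogeneous of degree $|\lambda|$ under the variable rescaling $p_j\mapsto s^j p_j$, the Cauchy identity applied to $\rho_+^{(s)}$ defined by $p_j(\rho_+^{(s)})=s^j\alpha j\hat f_j$ and its complex conjugate reads
\[
 \sum_\lambda s^{2|\lambda|}\frac{\abs{\J{\lambda}(\rho_+)}^2}{\alsc{\J{\lambda},\J{\lambda}}}=\exp\Bigl(\sum_{j\ge 1}s^{2j}\alpha j\abs{\hat f_j}^2\Bigr).
\]
Differentiating in $u=s^2$ at $u=1$ produces $\sum_\lambda|\lambda|\abs{\J{\lambda}(\rho_+)}^2/\alsc{\J{\lambda},\J{\lambda}}=\alpha\sum_{j\ge 1}j^2\abs{\hat f_j}^2\cdot e^{A_+}$ with $A_+=\alpha\sum_{j\ge 1}j\abs{\hat f_j}^2$, and analogously for $\rho_-$. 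Cauchy--Schwarz on the sum over $\lambda$ and AM--GM on the $\pm$ frequency parts (using $(A_++A_-)/2=\normHT{1/2}{f}^2/\beta$) then give
\[
 \sum_\lambda|\lambda|\frac{\abs{\J{\lambda}(\rho_+)\J{\lambda}(\rho_-)}}{\alsc{\J{\lambda},\J{\lambda}}}\le\frac{\alpha}{2}\normHT{1}{f}^2\,e^{\frac{1}{\beta}\normHT{1/2}{f}^2}.
\]
Inserting this into the bound on $T$, with $\alpha\max(\alpha-1,1)\le 4/\beta^2$ for $\beta\le 2$, completes \eqref{res_eq:cbe_est}.

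The main obstacle is the contribution of partitions with $n<l(\lambda)\le 2n$: the linearisation of $1-\mathcal{A}^\alpha_\lambda(2n)$ degenerates there because the denominator $2n-l(\lambda)+\alpha$ can be as small as $\alpha$, so a naive ``$\sum\epsilon_{ij}$'' estimate loses a factor of $n$. Trading the missing $1/n$ for $|\lambda|/n\ge 1$ and paying for it via the first moment of $|\lambda|$ against the Jack kernel is the key trick; the fact that this first moment reproduces exactly $\normHT{1}{f}^2$ (rather than the already-absorbed $\normHT{1/2}{f}^2$) hinges on performing the homogeneity-preserving rescaling $p_j\mapsto s^j p_j$ in the generating-function differentiation.
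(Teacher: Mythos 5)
Your proposal is correct and follows essentially the same route as the paper: the Gessel-type expansion of Theorem \ref{intro:gessel} for $2n$ particles, the Cauchy identity for the Gaussian factor, the linearization $1-\prod(1-a_j)\le\sum a_j$ for $\mathcal{A}^\alpha_\lambda(2n)$ together with the trade $1\le\abs{\lambda}/n$ on the tail, and the Cauchy--Schwarz plus first-moment-of-$\abs{\lambda}$ computation (your generating-function differentiation is exactly the paper's proof of Lemma \ref{CLT:diag_exp}). Your three-regime bookkeeping with $\max(\alpha-1,1)$ even yields the slightly sharper constant $2/\beta^2$ in place of $8/\beta^2$, so the stated inequality follows a fortiori.
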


Theorem \ref{res:cbe_est} is proved in Subsection \ref{ssec:cbe_est_proof}.

In the special case $\beta=2$, Theorem~\ref{introduction:gessel} relates multiplicative functionals to Schur measures, which were introduced by Okounkov \cite[Section~2.1]{O_01}. The determinantal structure of these measures (see \cite[Theorem~1]{O_01}, \cite[Section~3]{J_01} and the review \cite[Section 5]{BG_12}) yields an exact representation of the left-hand side of \eqref{res_eq:cbe_est} via a Fredholm determinant \cite[Theorem 1]{BO_00}. Theorem~\ref{res:cbe_est} for $\beta=2$ then follows from an operator-theoretic argument.

Finally, we mention several related developments. The loop equation method of Johansson has been further advanced in \cite{BL_18, BG_13, S_13}, where analogous problems are studied for $\beta$-ensembles on the real line with general potentials. Borodin, Gorin and Guionnet proved convergence of additive functionals for discrete $\beta$-ensembles in \cite{BGG_17}. The results of Jiang and Matsumoto were further pursued by Webb \cite{W_16} to establish the rate of the convergence of the power sums $p_j$ in the circular $\beta$-ensembles. For a comprehensive overview of $\beta$-ensembles, see Forrester \cite{F_10}.

\subsection{The Soshnikov Limit Theorem for the sine-$\beta$ process}

For $\beta>0$, the sine-$\beta$ process $\P_\beta$ is a Borel probability measure on the space of configurations --- the space of locally finite subsets of $\R$ (with multiplicities), endowed with the vague topology (see Section \ref{sec:sine_def}). For $\beta=2$, this measure coincides with the sine-process --- the determinantal point process induced by the sine-kernel
\[
K_{\mathcal{S}}(x, y) = \frac{\sin(x-y)}{x-y}.
\]
For $\beta=1, 4$ it may also be defined by its correlation functions expressed via pfaffians.

The sine-$\beta$ process admits several constructions.
Killip and Stoiciu \cite{KS_09} constructed it as a preimage of a randomly shifted lattice under a random monotone function. We review their approach in Appendix.
An alternative construction was developed by Valk\'o and Vir\'ag via the hyperbolic carousel formalism \cite{VV_09}, and,
subsequently, through the spectrum of a random Dirac operator \cite{VV_17}. These constructions are equivalent, as both arise as microscopic limits of the circular $\beta$-ensemble.

The following proposition may be taken as the definition of the sine-$\beta$ process.

\begin{proposition}\label{outline:convergence}
For a continuous compactly supported function $f$ and $\beta\le 2$, we have
\[
\E_\beta^n \left\{\prod_{j=1}^n e^{f(n\theta_j)}\right\} \to \E_\beta\left\{\prod_{x\in X} e^{f(x)}\right\}, \quad \text{ as }n\to\infty.
\]
\end{proposition}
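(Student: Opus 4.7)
The plan is to combine the Killip--Stoiciu convergence in distribution $\sum_{x\in nX_n}f(x)\Rightarrow S_f(X)$ (valid for $f\in C_c^1(\R)$, $X_n\sim\P_\beta^n$, $X\sim\P_\beta$) with uniform integrability of the exponential family $\{\exp(S_{f(n\cdot)}(X_n))\}_n$. The uniform integrability will come directly from the subgaussian bound \eqref{intro_eq:subgauss} of Corollary \ref{res:cbe_th}, which is exactly where the hypothesis $\beta\le 2$ is used.

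Fix $t>1$. For $n$ so large that $n\pi$ exceeds the radius of $\supp f$, the rescaled function $f_n(\theta):=f(n\theta)$ vanishes in a neighborhood of $\pm\pi$ and therefore defines a real-valued $C^1$ function on the unit circle, with $S_{f(n\cdot)}(X_n)=\sum_{j=1}^n f_n(\theta_j)$. Applying \eqref{intro_eq:subgauss} to $tf_n$ yields
\[
\E_\beta^n \exp\bigl(t\,S_{f(n\cdot)}(X_n)\bigr) \;\le\; \exp\!\left(t\,n\,\widehat{(f_n)}_0 \;+\; \frac{t^2}{\beta}\normHT{1/2}{f_n}^2\right).
\]

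A change of variables $y=n\theta$ shows $n\,\widehat{(f_n)}_0=(2\pi)^{-1}\int_\R f$ (independent of $n$ for these $n$) and $\widehat{(f_n)}_k=n^{-1}\tilde f(k/n)$, where $\tilde f$ denotes the Fourier transform on $\R$; consequently
\[
\normHT{1/2}{f_n}^2 \;=\; \frac{1}{n}\sum_{k\in\Z}\frac{|k|}{n}\bigl|\tilde f(k/n)\bigr|^2
\]
is a Riemann sum converging to $\normHR{1/2}{f}^2<\infty$ as $n\to\infty$ (using $f\in C_c^1(\R)\subset H_{1/2}(\R)$), and is in particular bounded uniformly in $n$. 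It follows that $\sup_n\E_\beta^n\exp(t\,S_{f(n\cdot)}(X_n))<\infty$, which establishes uniform integrability of $\{\exp(S_{f(n\cdot)}(X_n))\}_n$. Combined with the Killip--Stoiciu distributional limit, this produces the claimed convergence $\E_\beta^n\exp(S_{f(n\cdot)})\to\E_\beta e^{S_f}$. Beyond the direct application of Corollary \ref{res:cbe_th}, the only substantive point is the uniform boundedness of $\normHT{1/2}{f_n}$ under the microscopic rescaling, which is a routine Fourier-side computation.
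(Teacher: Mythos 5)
Your proposal is correct, but it follows a genuinely different and considerably shorter route than the paper. The paper does not invoke the distributional convergence of $S_{f(n\cdot)}$ as a black box; instead it reruns the Killip--Stoiciu argument from scratch inside the random-monotone-function formalism: it proves a general criterion (Proposition \ref{KSC:main_conv}) for convergence of $\E\Psi_{e^f}$ in terms of finite-dimensional convergence of the monotone functions $\psi_{n-1}(\cdot/n)$, using linear interpolation (Subsection \ref{ssec:interp_approx}), truncation on the number of particles, and uniform subgaussian tail bounds on $\psi_{n-1}(x/n)$ (Lemma \ref{KSC:number_est}), and then verifies the hypotheses via Theorem \ref{KSC:convergence}. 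The paper explicitly goes this way to avoid relying on simplicity of the limit process when passing from the Killip--Stoiciu result to joint particle-count distributions. Your argument sidesteps all of this: you only need the one-dimensional distributional convergence of the scalar $S_{f(n\cdot)}(X_n)$, which follows from the Killip--Stoiciu convergence for test functions with $\Re f\le 0$ (apply it to $isf$, $s\in\R$, and use L\'evy continuity), and you then upgrade to convergence of expectations by uniform integrability, with $\sup_n\E_\beta^n\exp(tS_{f(n\cdot)})<\infty$ for a fixed $t>1$ supplied by \eqref{intro_eq:subgauss} applied to $tf_n$. Both proofs hinge on the same two ingredients --- the subgaussian bound of Corollary \ref{res:cbe_th} (the only place $\beta\le 2$ enters) and the rescaling identities $n\widehat{(f_n)}_0\to(2\pi)^{-1}\int_\R f$, $\normHT{1/2}{f_n}\to\normHR{1/2}{f}$ --- so the mathematical content is shared; what your route buys is brevity and the avoidance of the interpolation/truncation machinery, at the cost of leaning on the cited distributional limit rather than re-deriving it. Two small points you should make explicit: first, the statement allows complex-valued $f$, so you need joint convergence in distribution of $(S_{\Re f(n\cdot)},S_{\Im f(n\cdot)})$ (Cram\'er--Wold, again from the purely imaginary case) and uniform integrability of $\abs{e^{S_{f(n\cdot)}}}=e^{S_{\Re f(n\cdot)}}$, which your bound gives; second, for uniform integrability you only need uniform boundedness of $\normHT{1/2}{f_n}$, which follows cleanly from $\normHT{1/2}{f_n}^2\le\norm{f_n}_{L_2(\T)}\normHT{1}{f_n}$ and exact Parseval identities, whereas the full Riemann-sum convergence of $\tfrac1n\sum_k\tfrac{\abs{k}}{n}\abs{\tilde f(k/n)}^2$ requires a word about the tails since $\abs{\lambda}\abs{\tilde f(\lambda)}^2$ need not be dominated by a monotone integrable function for general $C^1_c$ functions.
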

\begin{remark}
Killip and Stoiciu \cite{KS_09} proved this statement for non-positive, smooth, compactly supported functions. Since the set of circular $\beta$-ensembles under the microscopic scaling $\theta\mapsto n\theta$ is tight, their result implies weak convergence; the smoothness assumption can, therefore, be relaxed to continuity.
To avoid the non-positivity restriction, we use the uniform subgaussian estimate from Corollary \ref{res:cbe_th} (see Section~\ref{sec:sine_def}).
\end{remark}

Proposition \ref{outline:convergence} is proved in Section \ref{sec:sine_def}.

An analogue of this convergence for the construction of Valk\'o and Vir\'ag was established in~\cite[Theorem~1, Corollary~2]{VV_20}. We also note that the sine-$\beta$ process satisfies the Gibbs property \cite{DHLM_21}, although it is an open question if this property characterizes the process uniquely. The Gibbs property for a general class of determinantal processes, including the sine process, was established by Bufetov \cite{B_18}.

For a function $f$ on the real line, define
\[
\normHR{p}{f}^2 = \int_\R\abs{\lambda}^{2p}\abs{\hat{f}(\lambda)}^2d\lambda,
\]
and introduce the Sobolev space
\[
H^p(\R) = \{f\in L^2(\R): \normHR{p}{f}<+\infty\},
\] 
which is a Hilbert space if endowed with the norm
\[
\norm{\cdot}_{H^p(\R)}^2 = \normHT{p}{\cdot}^2+\norm{\cdot}_{L^2(\R)}^2.
\]

For a bounded Borel compactly supported function $f$, define the additive functional
\[
S_f(X) = \sum_{x\in X}f(x),\quad X\in\Conf(\R).
\]
By translation invariance, we have $S_f\in L_1(\Conf(\R), \P_\beta)$. The respective regularized additive functional is
\[
\overline{S}_f(X) = S_f(X)-\E_\beta S_f.
\]
We show in Subsection \ref{ssec:regularization} that the variance of $\overline{S}_f$ depends continuously on the $1/2$-Sobolev seminorm of~$f$. This allows one to extend the definition of additive functionals by continuity to all $1/2$-Sobolev regular functions on the real line (see Definition \ref{def:reg}).

Substituting $f(2n\cdot)$ into the inequalities \eqref{res_eq:cbe_est}, \eqref{intro_eq:subgauss} and using Proposition \ref{outline:convergence} yields the following result.

\begin{theorem}\label{res:sb_th}
Let $\beta\le 2$.

1. For $f\in H^1(\R)$, we have
\begin{equation}\label{res_eq:sb_th}
\begin{aligned}
&\abs*{\E_\beta e^{\overline{S}_f}\exp\left( - \frac{2}{\beta}\int_0^\infty\lambda\hat{f}(\lambda)\hat{f}(-\lambda)d\lambda\right)-1} \\
&\le\frac{8}{\beta^2 n}\exp\left(\frac{1}{\beta}\normHR{1/2}{f}^2-\frac{2}{\beta}\Re\int_0^\infty \lambda\hat{f}(\lambda)\hat{f}(-\lambda)d\lambda\right)\normHR{1}{f}^2.
\end{aligned}
\end{equation}

2. For real-valued $f\in H^{1/2}(\R)$, the subgaussian estimate
\[
\E_\beta e^{\overline{S}_f}\le \exp\left(\frac{2}{\beta}\int_0^\infty \lambda\hat{f}(\lambda)\hat{f}(-\lambda)d\lambda\right)
\]
holds.
\end{theorem}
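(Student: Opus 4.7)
The plan is to substitute the rescaled functions $g_n(\theta):=f(2n\theta)$, viewed as elements of $H_1(\T)$ for $n$ large enough that the support of $f$ fits inside $(-2n\pi,2n\pi)$, into the circle-level bounds \eqref{res_eq:cbe_est} and \eqref{intro_eq:subgauss}, and then to pass to the limit $n\to\infty$ using Proposition \ref{outline:convergence}. Under the Killip--Stoiciu scaling, the multiplicative functional $\prod_{j=1}^{2n}e^{g_n(\theta_j)}=\exp\sum_{x\in 2nX_{2n}}f(x)$ converts into $e^{S_f(X)}$ for $X\sim\P_\beta$, while the subtraction of $2n(\widehat{g_n})_0$ on the circle side becomes the regularization $\E_\beta S_f$ that turns $S_f$ into $\overline{S}_f$.

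First I set up the Fourier-analytic dictionary. Denoting by $(\widehat{g_n})_k$ the $k$-th Fourier coefficient of $g_n$, the change of variables $u=2n\theta$ gives, for smooth compactly supported $f$ and all $n$ large,
\[
(\widehat{g_n})_k=\frac{1}{4\pi n}\int_\R f(u)e^{-iku/(2n)}\,du=\frac{1}{2n}\hat f(k/(2n)).
\]
Riemann-sum approximation with step $\Delta\lambda=1/(2n)$ then yields
\begin{align*}
2n(\widehat{g_n})_0 &\to \frac{1}{2\pi}\int_\R f(x)\,dx=\E_\beta S_f,\\
\sum_{k\ge 1}k\,(\widehat{g_n})_k(\widehat{g_n})_{-k} &\to \int_0^\infty\lambda\,\hat f(\lambda)\hat f(-\lambda)\,d\lambda,\\
\normHT{1/2}{g_n}^2 &\to \normHR{1/2}{f}^2,\\
\tfrac{1}{2n}\normHT{1}{g_n}^2 &\to \normHR{1}{f}^2.
\end{align*}
Plugging these into the right-hand side of \eqref{res_eq:cbe_est} recovers the right-hand side of \eqref{res_eq:sb_th}, with the factor $1/n$ in \eqref{res_eq:cbe_est} absorbing one factor of $n$ coming from $\normHT{1}{g_n}^2$.

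Next I pass to the limit on the left-hand side. Writing
\[
\E_\beta^{2n}\prod_{j=1}^{2n}e^{g_n(\theta_j)}\cdot e^{-2n(\widehat{g_n})_0}=\E_\beta^{2n}\exp\Bigl(\sum_{x\in 2nX_{2n}}f(x)-2n(\widehat{g_n})_0\Bigr),
\]
Proposition \ref{outline:convergence} applied to the bounded smooth compactly supported integrand $f$ identifies the limit as $\E_\beta\exp(S_f-\E_\beta S_f)=\E_\beta e^{\overline{S}_f}$. Combined with the first step, this establishes \eqref{res_eq:sb_th} for smooth compactly supported $f$; the same substitution into \eqref{intro_eq:subgauss} yields the subgaussian bound for real-valued smooth compactly supported $f$.

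Finally, I would extend from the dense class of smooth compactly supported functions to general $f\in H_1(\R)$ (respectively $f\in H_{1/2}(\R)$) by density. The right-hand sides are manifestly continuous in the appropriate Sobolev norm; on the left, $\overline{S}_f$ is continuous from $\dot H_{1/2}(\R)$ into $L_2(\P_\beta)$ by the construction in Subsection \ref{ssec:regularization}. To upgrade $L_2$-convergence of approximants $\overline{S}_{f_n}$ to convergence of $\E_\beta\exp(\overline{S}_{f_n})$, one invokes the subgaussian clause, already proved on the dense class, via a Cauchy--Schwarz argument. The main obstacle is precisely this last density step: the two clauses of the theorem are genuinely intertwined, since the subgaussianity input is what legitimizes the approximation needed to extend the CLT bound beyond the smooth compactly supported targets covered by Proposition \ref{outline:convergence}.
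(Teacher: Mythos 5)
Your proposal follows essentially the same route as the paper: substitute $f(2n\cdot)$ into the circle-level estimates, verify the Riemann-sum convergences of the Fourier quantities, identify the limit of the left-hand side via Proposition \ref{outline:convergence}, and then extend by density using the continuity of $\E_\beta e^{\overline{S}_f}$ in the $\dot H_{1/2}(\R)$ seminorm, which the paper likewise derives from the subgaussian bound on the dense class via Cauchy--Schwarz. You correctly identify the density step and its reliance on the subgaussianity clause as the crux, exactly as in Subsection \ref{ssec:regularization}.
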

\begin{remark}
The limit variance of $\overline{S}_f$ is invariant under dilations of $f$, while
$$\normHR{p}{f(\cdot/R)} = R^{1/2-p}\normHR{p}{f}.$$
Consequently, the additive functional $\overline{S}_{f(\cdot/R)}$ converge to a Gaussian distribution as $R\to\infty$.
\end{remark}

Theorem \ref{res:sb_th} is proved in Section \ref{sec:cbe_th_proof}.


Theorem \ref{res:sb_th} implies the convergence of additive functionals to the Gaussian distribution with respect to the Kolmogorov-Smirnov metric. For a real-valued function $f$, denote
\[
F_f(x) = \P_\beta(\overline{S}_f\le x), \quad F_{\mathcal{N}}(x)=\frac{1}{\sqrt{2\pi}}\int_{-\infty}^xe^{-\frac{t^2}{2}}dt.
\]

\begin{corollary}\label{res:KS_conv}
For real-valued $f\in H^1(\R)$ and $\beta \le 2$, there exists a constant $C$ such that for any $R>0$, the inequality
\[
\sup_x\abs{F_{f(\cdot/R)}-F_{\mathcal{N}}} \le \frac{C}{\sqrt{\ln R}}
\]
holds, provided the function $f$ is normalized by
\[
\int_0^\infty \lambda\hat{f}(\lambda)\hat{f}(-\lambda)d\lambda = \frac{\beta}{4}.
\]
\end{corollary}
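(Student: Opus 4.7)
The plan is to convert the Laplace-transform estimate of Theorem~\ref{res:sb_th} into a pointwise bound on the characteristic function $\phi_R(t) := \E_\beta\exp(it\overline{S}_{f(\cdot/R)})$, and then invoke a classical Esseen-type smoothing inequality to translate this into a bound on the Kolmogorov distance to the standard normal.

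First, I would apply \eqref{res_eq:sb_th} to the complex-valued function $g := itf(\cdot/R)$, which lies in $H_1(\R)$. Under the dilation, the Fourier transform scales as $\hat{g}(\lambda) = it R\hat{f}(R\lambda)$, so a change of variables yields $\normHR{1/2}{g}^2 = t^2 \normHR{1/2}{f}^2$ (scale invariance) and $\normHR{1}{g}^2 = t^2 R^{-1}\normHR{1}{f}^2$. Since $f$ is real-valued, $\hat{f}(-\lambda)=\overline{\hat f(\lambda)}$, and combining with the normalization gives
\[
\int_0^\infty\lambda\,\hat{g}(\lambda)\hat{g}(-\lambda)\,d\lambda \;=\; -t^2\int_0^\infty\lambda\,|\hat f(\lambda)|^2\, d\lambda \;=\; -\frac{\beta t^2}{4}.
\]
Substituting into \eqref{res_eq:sb_th}, the left-hand side becomes $|\phi_R(t)e^{t^2/2}-1|$; multiplying through by $e^{-t^2/2}$ then yields
\[
|\phi_R(t) - e^{-t^2/2}| \;\le\; c_1\, R^{-1}\, t^2\, \exp(c_2 t^2), \qquad t\in\R,
\]
for constants $c_1, c_2$ depending only on $\beta$, $\normHR{1}{f}$ and $\normHR{1/2}{f}$.

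The second step is the Esseen smoothing inequality: for every $T>0$,
\[
\sup_x\bigl|F_{f(\cdot/R)}(x)-F_{\mathcal N}(x)\bigr| \;\le\; C_1\int_{-T}^T\frac{|\phi_R(t)-e^{-t^2/2}|}{|t|}\,dt \;+\; \frac{C_2}{T},
\]
with absolute constants $C_1, C_2$ (the second term reflects the bounded density of the Gaussian). Inserting the characteristic-function bound and evaluating the resulting elementary integral,
\[
\int_{-T}^T\frac{c_1 R^{-1}t^2 e^{c_2 t^2}}{|t|}\,dt \;=\; \frac{c_1}{c_2 R}\bigl(e^{c_2 T^2}-1\bigr).
\]
Choosing $T^2 = \tfrac{1}{2c_2}\ln R$ makes $e^{c_2 T^2}=R^{1/2}$, so the integral contribution is of order $R^{-1/2}$, while the boundary term is $C_2/T = O((\ln R)^{-1/2})$; the latter dominates and produces the claimed rate.

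The only delicate point is the cutoff selection: the pointwise bound on $\phi_R - e^{-t^2/2}$ grows like $e^{c_2 t^2}$, so $T$ cannot be taken polynomially large in $R$, and the logarithmic restriction $c_2 T^2 \le \tfrac{1}{2}\ln R$ is exactly what forces the slow $(\ln R)^{-1/2}$ rate. Once $T$ is balanced, no further input is needed beyond Theorem~\ref{res:sb_th} and the classical smoothing lemma.
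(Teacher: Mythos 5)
Your proposal is correct and follows essentially the same route as the paper: apply Theorem~\ref{res:sb_th} to $it f(\cdot/R)$, use the scaling identities $\normHR{1/2}{f(\cdot/R)}=\normHR{1/2}{f}$ and $\normHR{1}{f(\cdot/R)}^2=R^{-1}\normHR{1}{f}^2$ to get $\abs{\varphi_R(t)-e^{-t^2/2}}\le c_1R^{-1}t^2e^{c_2t^2}$, and then feed this into the Feller--Esseen smoothing inequality with the cutoff $T\asymp\sqrt{\ln R}$ so that the $O(1/T)$ term dominates. The only cosmetic difference is that you evaluate the resulting integral exactly while the paper bounds it crudely by $\frac{2KT^2}{R}e^{KT^2}$; the balancing of $T$ is identical.
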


Corollary \ref{res:KS_conv} is proved in Section \ref{sec:KS_conv}.

Corollary \ref{res:cbe_th} is established under the optimal condition of $1/2$-Sobolev regularity. A natural question is whether this regularity condition remains sufficient for the sine-$\beta$ process.
\begin{theorem}\label{res:opt_conv}
For $\beta\le 2$ and $f\in H^{1/2}(\R)$, the convergence
\[
\E_\beta e^{\overline{S}_{f(\cdot/R)}} \to \exp\left(\frac{2}{\beta}\int_0^\infty\lambda\hat{f}(\lambda)\hat{f}(-\lambda)d\lambda\right) \quad \text{ as }R\to\infty
\]
holds.
\end{theorem}
\begin{remark}
The condition $f\in L^2(\R)$ can be omitted, and one may consider functions that have a finite $1/2$-Sobolev seminorm (with the Fourier transform taken in the distributional sense). In this case, though, one has to correctly define the regularization of additive functionals (see remark to Definition \ref{def:reg}).
\end{remark}
\begin{remark}
To prove Theorem \ref{res:opt_conv}, one only needs the second claim of Theorem \ref{res:sb_th} and convergence for a dense subset of the space $H^{1/2}(\R)$. The latter was established by Lebl\'e \cite[Theorem 1]{L_20} and Lambert \cite[Theorem 1.10]{L_21}.
\end{remark}

Theorem \ref{res:opt_conv} is proved in Subsection \ref{ssec:opt_proof}.

Our proof relies only on the identification of the sine-$\beta$ process as the scaling limit of the circular $\beta$-ensemble and is, in a sense, independent of previous constructions of $\P_\beta$. More precisely, even without the results of Killip and Stoiciu \cite{KS_09} or Valk\'o and Vir\'ag \cite{VV_20}, Theorems \ref{res:sb_th}, \ref{res:KS_conv}, \ref{res:opt_conv} and Proposition \ref{outline:convergence} would still hold for any weak subsequential limit of the circular $\beta$-ensemble under the microscopic scaling. Such limits exist by tightness (see Section \ref{sec:sine_def}). The role of the aforementioned works is therefore to identify these limits uniquely.

For $\beta=2$, the convergence of additive functionals $\overline{S}_{f(\cdot/R)}$ to the Gaussian distribution was established by Soshnikov \cite[Theorem 1]{S_00}. Theorem \ref{res:KS_conv} for $\beta=2$ was proved by Bufetov \cite{B_25}.

Let us list some of related works.
The convergence of additive functionals under the sine-$\beta$ process for all~$\beta>0$ has previously been established by Lebl\'e \cite[Theorem 1]{L_20} from the Gibbs property \cite[Theorem~1.1]{DHLM_21}. Another proof is due to Lambert \cite[Theorem 1.10]{L_21}. Lambert relied on the coupling of the sine-$\beta$ process with the circular $\beta$-ensemble, introduced by Valk\'o and Vir\'ag \cite[Theorem 1]{VV_20}. Both methods require the function to be compactly supported. This restriction is, however, unnatural, as may be seen with the example of~$\beta=2$, for which the exact formula is available --- the scaling limit of the Borodin-Okounkov formula~\cite{BC_03, B_24}. This formula implies Theorems~\ref{res:sb_th}, \ref{res:KS_conv}, \ref{res:opt_conv} for $\beta=2$ and, in particular, yields that the convergence of~$\overline{S}_{f(\cdot/R)}$ depends not on the decay of the function $f$ itself, but rather on the decay of its Fourier transform. Our method avoids the compact support restriction by deriving an explicit inequality~\eqref{res_eq:sb_th}, which may be extended by continuity to all functions, for which the $1$-Sobolev seminorm is finite.

We note that the dependence of the variance of additive functionals on the seminorm, rather than the norm, is connected, following Ghosh and Peres \cite[Theorem 6.1]{GP_17}, to the rigidity of the process \cite{G_15, GP_17}. In particular, this property allows one to approximate an indicator function by a function with arbitrarily small variance of the respective additive functional. This was used by Chhaibi and Najnudel \cite[Theorem~1.2]{CN_18} to show the rigidity of the sine-$\beta$ process.

\subsection{Structure of the paper}
The rest of the paper is structured as follows. We recall the definition of the Jack polynomials in Subsection~\ref{sec:jack}. Next we prove Theorem~\ref{intro:gessel} and Corollary~\ref{res:cbe_th} in Subsection~\ref{ssec:cbe_th_proof}. We conclude the proof of Theorem~\ref{res:cbe_est} in Subsection~\ref{ssec:cbe_est_proof}. In Section~\ref{sec:cbe_th_proof}, we use the former results and Proposition~\ref{outline:convergence} to prove Theorem~\ref{res:sb_th}. We show how Theorem~\ref{res:opt_conv} follows from Theorem~\ref{res:sb_th} in Subsection~\ref{ssec:opt_proof}. Section~\ref{sec:sine_def} is devoted to the proof of Proposition~\ref{outline:convergence}. Finally, we show how Corollary~\ref{res:KS_conv} follows from Theorem~\ref{res:sb_th} in Section~\ref{sec:KS_conv}.

\subsection{Acknowledgements}
The author is a winner of the BASIS Foundation Competition and is deeply grateful to the Jury
and the sponsors. The author is a winner of the all-Russia mathematical August Moebius contest of graduate and undergraduate student papers and thanks the jury and the board for the high praise of his work.

\section{Proof of Theorem \ref{intro:gessel}, Corollary \ref{res:cbe_th} and Theorem \ref{res:cbe_est}}

\subsection{Jack polynomials}\label{sec:jack}
This section recalls the definition and properties of the Jack polynomials, following Macdonald \cite{M_79}. A brief introduction to the theory of symmetric functions is contained in \cite[Chapter 2]{BO_17}, \cite[Section 2]{BG_12}.

\textbf{Partitions}

A \textit{partition} $\lambda = (\lambda_1, \lambda_2, \ldots)$ is a non-increasing sequence of nonnegative integers, $\lambda_1 \ge \lambda_2 \ge \cdots$, with only finitely many non-zero elements. The number of non-zero parts is the \textit{length} $l(\lambda)$, and the sum of the parts is the \textit{size} $\abs{\lambda} = \sum_j \lambda_j$. The \textit{dominance order} is a partial order on partitions of fixed size: we say that $\lambda \ge \mu$ if
\[
\sum_{j=1}^k \lambda_j\ge \sum_{j=1}^k \mu_j, \quad \text{ for all }k\ge 0.
\]

\textbf{Symmetric functions}

The algebra of symmetric functions in finitely many variables is $$\Lambda_N = \mathbb{C}[x_1, \dots, x_N]^{S_N},$$ graded by degree.
A projective limit with respect to the natural projections $\pi_N^{N+k}: \Lambda_{N+k} \to \Lambda_N$ (setting~$x_{N+1}, \dots, x_{N+k}$ to zero) yields the \textit{algebra of symmetric functions} in infinitely many variables,
\[
\Lambda = \bigoplus_{n=0}^\infty \Lambda^n, \quad \Lambda^n= \varprojlim_N \Lambda_N^n.
\]
An element of $\Lambda$ may be viewed as a symmetric formal power series in infinitely many variables whose terms have bounded degree.

A fundamental multiplicative basis is provided by the \textit{Newton power sums}
\[
p_k = \sum_{j=1}^\infty x_j^k, \in \Lambda\quad k\in \N,
\]
with $p_0=1$. For the purposes of this paper, it is most convenient to regard $\Lambda$ as a polynomial ring in the variables $\{p_j\}_{j\ge 1}$.

\begin{theorem}\label{JP:newton_basis}
The Newton power sums form a multiplicative basis of $\Lambda$:
\[
\Lambda\simeq \C[p_0, p_1, \ldots].
\]
\end{theorem}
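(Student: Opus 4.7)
The plan is to reduce to the classical Fundamental Theorem of Symmetric Functions (FTSF), which identifies $\Lambda$ with the polynomial algebra on the elementary symmetric functions $e_k = \sum_{i_1 < \cdots < i_k} x_{i_1}\cdots x_{i_k}$, and then use Newton's identities to translate between $\{e_k\}$ and $\{p_k\}$.

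First, I would invoke (or briefly recall the proof of) FTSF: $\Lambda \simeq \C[e_1, e_2, \ldots]$, where the isomorphism is obtained by showing that any symmetric polynomial in finitely many variables can be written uniquely as a polynomial in the $e_k$, and then passing to the inverse limit in a degree-preserving way. This gives a graded isomorphism in which the monomials $e^\lambda := e_{\lambda_1}e_{\lambda_2}\cdots$ over partitions $\lambda$ of $n$ form a basis of $\Lambda^n$.

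Next, I would invoke Newton's identities
\[
k\, e_k \;=\; \sum_{i=1}^{k} (-1)^{i-1} e_{k-i}\, p_i, \qquad k\ge 1,
\]
which hold in $\Lambda_N$ for every $N$ and therefore in $\Lambda$. By induction on $k$, the identity solves for $p_k$ as a polynomial with integer coefficients in $e_1, \ldots, e_k$, and conversely for $e_k$ as a polynomial with rational coefficients in $p_1, \ldots, p_k$ (the denominators are just $k!$, so we remain inside the $\C$-algebra). Hence the subalgebra of $\Lambda$ generated by $\{p_j\}_{j\ge 1}$ contains all $e_k$, and thus equals $\Lambda$.

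Finally, for algebraic independence of the $p_k$: since the change of basis between $\{e_1,\ldots,e_n\}$ and $\{p_1,\ldots,p_n\}$ is triangular in the graded sense (each $p_k$ equals $k\,e_k$ plus lower-degree-in-index corrections, up to sign), the monomials $p^\lambda = p_{\lambda_1}p_{\lambda_2}\cdots$ for $\lambda \in \Y_n$ remain linearly independent in $\Lambda^n$, matching the known basis $\{e^\lambda\}$ in count. Comparing dimensions degree-by-degree gives $\Lambda \simeq \C[p_1, p_2, \ldots]$, with the convention $p_0 = 1$ absorbed into the constants. The only real subtlety, and hence the main thing to verify carefully, is that the recursion derived from Newton's identities is genuinely solvable over $\C$ (which is immediate since we divide only by positive integers) and that the resulting triangular change of basis preserves the graded dimensions; everything else is a direct consequence of FTSF.
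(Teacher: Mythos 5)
The paper does not prove this statement at all: it is quoted as a classical fact, with the reader referred to Macdonald's book (where it is I.(2.12)), so there is no in-paper argument to compare yours against. Your proof is the standard one and is correct: the Fundamental Theorem of Symmetric Functions gives $\Lambda\simeq\C[e_1,e_2,\ldots]$, Newton's identities $ke_k=\sum_{i=1}^k(-1)^{i-1}e_{k-i}p_i$ show that each $e_k$ lies in $\mathbb{Q}[p_1,\ldots,p_k]$ and vice versa (division only by positive integers, so characteristic zero suffices and is genuinely needed here), hence the $p_k$ generate $\Lambda$; and since the $p^\lambda$ with $\abs{\lambda}=n$ span $\Lambda^n$ while their number equals $\dim\Lambda^n=\abs{\Y_n}$, they are a basis, giving algebraic independence. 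Two cosmetic points: the leading term is $p_k=(-1)^{k-1}ke_k+(\text{products of }e_i,\ i<k)$, so a sign is missing in your triangularity remark (harmless, as the dimension count alone already closes the argument); and your handling of the paper's slightly abusive inclusion of $p_0=1$ among the ``generators'' --- absorbing it into the constants, so that the honest statement is $\Lambda\simeq\C[p_1,p_2,\ldots]$ --- is the right reading.
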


For a partition $\lambda$, set $p_\lambda=\prod_jp_{\lambda_j}$. 
For a real parameter $\alpha > 0$, we define a scalar product $\alsc{\cdot, \cdot}$ on $\Lambda$ by declaring the power-sum basis to be orthogonal:
\begin{equation}\label{JP_eq:scalar}
\alsc{p_\lambda, p_\mu} = \delta_{\lambda\mu}z_\lambda \alpha^{l(\lambda)}, \quad \text{ where }z_\lambda = \prod_{j\ge 1}j^{m_j}m_j!,
\end{equation}
and $m_j$ is the multiplicity of $j$ in $\lambda$. This scalar product will be used to define the Jack polynomials.

\textbf{Schur polynomials}

The \textit{Schur polynomials} form another important basis of $\Lambda$. For $N \ge l(\lambda)$, they can be defined as a ratio of determinants
\[
s_\lambda (x_1, \ldots, x_N) = \frac{\det (x_i^{\lambda_j+N-j})_{i, j=1}^k}{\det (x_i^{N-j})_{i, j=1}^N} \in \Lambda_N.
\]
These are compatible with the natural projections and thus define elements of $\Lambda$. Moreover, these polynomials are homogeneous, satisfy $\deg s_\lambda = \abs{\lambda}$, and are orthonormal with respect to $\langle \cdot, \cdot \rangle_1$.

\textbf{Jack polynomials}

The \textit{Jack polynomials} $\J{\lambda}$ arise as a one-parameter deformation of the Schur polynomials. More concretely, one applies the Gram-Schmidt orthogonalization procedure to the Schur polynomials with respect to the inner product $\alsc{\cdot, \cdot}$. When the partitions are ordered by the dominance order, the resulting orthogonal basis consists of the Jack polynomials. Since that order is only partial, it is non-trivial that Jack polynomials are orthogonal.

\begin{theorem}\label{Jack:def}
There exists a unique family $\{\J{\lambda}\}_\lambda$ of symmetric functions such that
\begin{itemize}
\item $\alsc{\J{\lambda}, \J{\mu}} = 0$ for $\lambda \ne \mu$,
\item $\J{\lambda} = s_\lambda + \sum_{\mu<\lambda}C_{\lambda\mu}s_\mu$ for some $C_{\lambda\mu}\in\mathbb{Q}(\alpha)$.
\end{itemize}
We call $\J{\lambda}$ Jack polynomials.
\end{theorem}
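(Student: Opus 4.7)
The plan is a Gram--Schmidt orthogonalization carried out within each graded component of $\Lambda$. First observe that $\alsc{\cdot,\cdot}$ respects the grading $\Lambda=\bigoplus_n \Lambda^n$ and is non-degenerate on each $\Lambda^n$: by Theorem~\ref{JP:newton_basis} the family $\{p_\lambda\}_{\lambda\in\Y_n}$ is a basis of $\Lambda^n$, and formula~\eqref{JP_eq:scalar} shows these basis vectors are pairwise orthogonal with nonzero norms $z_\lambda\alpha^{l(\lambda)}$. In particular, $\alsc{\Lambda^m,\Lambda^n}=0$ for $m\ne n$, so it suffices to construct $\{\J{\lambda}\}_{\lambda\in\Y_n}$ inside each finite-dimensional $\Lambda^n$.

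Next, fix a total order $\prec$ on $\Y_n$ refining the dominance order (for instance reverse lexicographic) and apply Gram--Schmidt to $\{s_\lambda\}_{\lambda\in\Y_n}$ enumerated according to $\prec$, normalizing so that the coefficient of $s_\lambda$ in $\J{\lambda}$ is $1$. Non-degeneracy of the form makes the procedure well defined at each step, and yields a pairwise orthogonal family of the shape $\J{\lambda}=s_\lambda+\sum_{\mu\prec\lambda}c_{\lambda\mu}s_\mu$ with coefficients in $\mathbb{Q}(\alpha)$, since the required divisions involve only the nonzero quantities $\alsc{\J{\mu},\J{\mu}}\in\mathbb{Q}(\alpha)$.

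The main obstacle is to upgrade $\prec$-triangularity to dominance-triangularity, i.e.\ to show that $c_{\lambda\mu}=0$ whenever $\mu\not<\lambda$ in dominance. Following Macdonald~\cite{M_79}, I would establish this by exhibiting a Laplace--Beltrami-type differential operator $D_\alpha$ on $\Lambda$ which (i) is symmetric with respect to $\alsc{\cdot,\cdot}$, (ii) is triangular in the Schur basis with respect to the dominance order, and (iii) has diagonal entries $\varepsilon_\lambda$ that separate distinct partitions of the same size. Given such a $D_\alpha$, a standard argument identifies $\J{\lambda}$ as the unique $\varepsilon_\lambda$-eigenvector of $D_\alpha$ in $\Lambda^n$ whose expansion in Schur functions has leading coefficient~$1$; dominance-triangularity of $\J{\lambda}$ then follows from that of $D_\alpha$ combined with the distinctness of the eigenvalues.

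Once dominance-triangularity is in hand, uniqueness under the two stated conditions is a short linear-algebra argument. If $\J{\lambda}$ and $\widetilde J_\lambda^{(\alpha)}$ both satisfy them, their difference lies in $\spann\{s_\mu:\mu<\lambda\}$ and, by induction on $\prec$, is orthogonal to the entire span $\spann\{\J{\mu}:\mu<\lambda\}=\spann\{s_\mu:\mu<\lambda\}$; by non-degeneracy of $\alsc{\cdot,\cdot}$ on $\Lambda^n$ this forces the difference to vanish. Finally, the triangular change of basis between $\{\J{\lambda}\}_{\lambda\in\Y_n}$ and $\{s_\lambda\}_{\lambda\in\Y_n}$ on each $\Lambda^n$ shows that $\{\J{\lambda}\}_{\lambda\in\Y}$ is a basis of $\Lambda$.
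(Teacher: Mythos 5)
The paper offers no proof of this statement at all --- it is quoted as a classical result of Macdonald --- so the only comparison available is with the standard argument you are reconstructing. Your architecture is the right one: reduce to each finite-dimensional graded piece $\Lambda^n$, where the form is positive definite (hence Gram--Schmidt is well defined over $\mathbb{Q}(\alpha)$), observe that orthogonalization along a total refinement $\prec$ of dominance comes for free, and identify the real content as upgrading $\prec$-triangularity to dominance-triangularity. Your uniqueness paragraph is also essentially correct.

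The gap is in property (iii) of your operator $D_\alpha$. For the Laplace--Beltrami (Sekiguchi--Debiard) operator the eigenvalue on $\J{\lambda}$ is a function of $\abs{\lambda}$, $n(\lambda)=\sum_i(i-1)\lambda_i$ and $n(\lambda')$ only, and these do \emph{not} separate distinct partitions of the same size: for $\lambda=(4,1,1)$ and $\mu=(3,3)$ one has $n(\lambda)=n(\mu)=3$ and $n(\lambda')=n(\mu')=6$, so $\varepsilon_\lambda=\varepsilon_\mu$ identically in $\alpha$. What is true --- and all that the triangular construction of the eigenvector needs --- is that $\varepsilon_\lambda\neq\varepsilon_\mu$ whenever $\lambda>\mu$ in dominance, since then $\varepsilon_\lambda-\varepsilon_\mu=\alpha\,(n(\lambda')-n(\mu'))+(n(\mu)-n(\lambda))>0$. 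But with only this weaker separation, self-adjointness yields $\alsc{\J{\lambda}, \J{\mu}}=0$ only when the eigenvalues differ; for incomparable pairs with equal eigenvalues, such as the one above, orthogonality does not follow, and your ``standard argument'' identifying $\J{\lambda}$ as \emph{the} $\varepsilon_\lambda$-eigenvector with leading coefficient $1$ breaks down because that eigenspace has dimension greater than one. The standard repairs are either Macdonald's double-triangularity lemma (Ch.~VI, (4.6)--(4.7) of \cite{M_79}): a family that is dominance-upper-triangular in one basis and whose members are dominance-lower-triangular in the dual basis is automatically orthogonal, with no eigenvalue separation needed; or else to use the full commuting family of Sekiguchi--Debiard operators (equivalently, to degenerate from Macdonald polynomials in $n$ variables, where the single operator $D_n$ does have joint eigenvalues separating all partitions with at most $n$ parts). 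Either way, one more input is required than what you wrote down.
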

\begin{remark}
Usually the second property is given in terms of monomial symmetric functions. Observe, however, that the definition above is equivalent since $J^{(1)}_{\lambda}=s_\lambda$.
\end{remark}

\textbf{Properties of Jack polynomials}

A symmetric function in $n$ variables can be evaluated on an $n$-tuple $(z_1, \dots, z_n)$ of points on the unit circle $\mathbb{T}$. We equip $\Lambda_n$ with the inner product
\begin{equation}\label{JP_eq:scalar_n}
\alscn{u, v}=\cbee{2/\alpha}u\bar{v}.
\end{equation}
Let $\pi_n: \Lambda \to \Lambda_n$ be the canonical projection that sets $x_{n+1}, x_{n+2}, \dots$ to zero. For a partition $\lambda$, define the factor
\begin{equation}\label{JP_eq:A_def}
\A = \prod_{(i, j)\in \lambda}\left(1 - \frac{\alpha - 1}{n+j\alpha - i}\right),
\end{equation}
where $(i,j)$ runs over the cells of the Young diagram of $\lambda$ (i.e., $i\ge 1$, $1 \le j \le \lambda_i$).

\begin{proposition}[{\cite[(10.22), (10.35), (10.37)]{M_79}, \cite[Lemma 4.2]{JM_15}}]\label{Jack:orth}
Jack polynomials are orthogonal with respect to the circular $\beta$-ensemble:
\[
\frac{\alscn{\pi_n (\J{\lambda}), \pi_n(\J{\mu})}}{\alsc{\J{\lambda}, \J{\lambda}}} = \delta_{\lambda, \mu}\delta_{l(\lambda)\le n}\A.
\]
\end{proposition}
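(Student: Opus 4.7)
The claim decomposes into three assertions indexed by the cases of the right-hand side: vanishing of $\pi_n(\J{\lambda})$ when $l(\lambda)>n$ (forcing both sides to be zero), orthogonality of the projected Jack polynomials for $\lambda\ne\mu$ with $l(\lambda),l(\mu)\le n$, and the explicit ratio of norms when $\lambda=\mu$. These are exactly the content of Macdonald VI.(10.22), (10.35), (10.37), and are reassembled in Jiang--Matsumoto's Lemma 4.2; I would establish each piece in turn.

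The vanishing is a standard property of Jack polynomials in finitely many variables: $P_\lambda^{(\alpha)}(x_1,\ldots,x_n)=0$ whenever $l(\lambda)>n$. It can be obtained either from the Knop--Sahi combinatorial formula (there are no admissible tableaux of shape $\lambda$ with entries in $\{1,\ldots,n\}$) or inductively from the stability $P_\lambda^{(\alpha)}(x_1,\ldots,x_n,0)=P_\lambda^{(\alpha)}(x_1,\ldots,x_n)$ combined with the degree bound forced by the triangular expansion in the monomial basis. For the orthogonality in the case $\lambda\ne\mu$, I would use the Sekiguchi--Debiard differential operator $D_n^{(\alpha)}$ on $\Lambda_n$. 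Three properties suffice: each $\pi_n(\J{\lambda})$ is an eigenfunction of $D_n^{(\alpha)}$; distinct partitions of length $\le n$ yield distinct eigenvalues; and $D_n^{(\alpha)}$ is symmetric with respect to $\alscn{\cdot,\cdot}$. The symmetry is verified by integration by parts on $\T^n$, where the weight $\prod_{j<k}|e^{i\theta_j}-e^{i\theta_k}|^{2/\alpha}$ is precisely the factor that cancels boundary terms and the first-order piece of $D_n^{(\alpha)}$. Eigenfunctions for distinct eigenvalues are then automatically orthogonal.

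The main obstacle is the norm computation for $\lambda=\mu$ with $l(\lambda)\le n$. Macdonald's product formula expresses $\alsc{\J{\lambda},\J{\lambda}}$ as a product over boxes $(i,j)\in\lambda$ of arm--leg factors, and a Selberg--Kadell--Macdonald evaluation computes $\alscn{\pi_n(\J{\lambda}),\pi_n(\J{\lambda})}$ as the analogous product with extra $n$-dependent factors coming from the $\T^n$ integration. The ratio must then be shown, box by box, to telescope to
\[
\A=\prod_{(i,j)\in\lambda}\left(1-\frac{\alpha-1}{n+j\alpha-i}\right).
\]
The difficulty is not in deriving either formula in isolation (each is a classical computation), but in the careful bookkeeping of conversion factors between Macdonald's $P_\lambda^{(\alpha)}$ (the monic normalization used for the Selberg integral) and the $\J{\lambda}$ used here, and in verifying that the combinatorial residue after cancellation is exactly $\A$ with no stray factors. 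Once the two product formulas are aligned in the same normalization, the telescoping identity is elementary but numerically delicate.
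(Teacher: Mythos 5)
The paper offers no proof of this proposition at all: it is imported verbatim from Macdonald VI.(10.22), (10.35), (10.37) and Jiang--Matsumoto Lemma~4.2, and your sketch is a correct reconstruction of exactly that standard argument (vanishing for $l(\lambda)>n$, self-adjointness of the Sekiguchi--Debiard operators plus distinctness of eigenvalues for off-diagonal orthogonality, and the $n$-variable torus norm formula whose box factors $\frac{n+\alpha(j-1)-(i-1)}{n+\alpha j-i}$ are precisely $1-\frac{\alpha-1}{n+j\alpha-i}$). The normalization bookkeeping you flag as the main obstacle is actually harmless here: the quantity $\alscn{\pi_n\J{\lambda},\pi_n\J{\lambda}}/\alsc{\J{\lambda},\J{\lambda}}$ is invariant under rescaling $\J{\lambda}\mapsto c_\lambda\J{\lambda}$, so the $P$ versus $J$ convention drops out of the diagonal case entirely.
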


Finally, we recall the Cauchy identity.

\begin{proposition}[{\cite[p. 309, p. 377]{M_79}}]\label{JP:cauchy}
We have the equality
\[
\sum_{\abs{\lambda}=k}\frac{\J{\lambda}(x)\J{\lambda}(y)}{\alsc{\J{\lambda}, \J{\lambda}}} = \sum_{\abs{\lambda}=k}\frac{p_\lambda(x)p_\lambda(y)}{z_\lambda\alpha^{l(\lambda)}}.
\]
Summing over all $k\ge 0$ yields the Cauchy identity
\[
\sum_{\lambda}\frac{\J{\lambda}(x)\J{\lambda}(y)}{\alsc{\J{\lambda}, \J{\lambda}}} = \exp\left(\frac{1}{\alpha}\sum_{j=1}^\infty \frac{p_j(x), p_j(y)}{j}\right).
\]
\end{proposition}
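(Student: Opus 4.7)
The plan is to recognize both sides of the first identity as reproducing kernels for the scalar product $\alsc{\cdot,\cdot}$ restricted to the homogeneous component $\Lambda^k$, and then expand the resulting exponential generating function.

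First I would record the following linear-algebra fact. Fix $k\ge 0$ and work in the finite-dimensional space $\Lambda^k$ with the symmetric bilinear form $\alsc{\cdot,\cdot}$. For any two bases $\{u_\lambda\}_{|\lambda|=k}$ and $\{v_\lambda\}_{|\lambda|=k}$ of $\Lambda^k$ that are dual under this pairing, that is $\alsc{u_\lambda, v_\mu} = \delta_{\lambda\mu}$, the tensor
\[
K_k := \sum_{|\lambda|=k} u_\lambda\otimes v_\lambda \;\in\; \Lambda^k\otimes\Lambda^k
\]
does not depend on the choice of the pair. Indeed, any change of basis $u'_\mu=\sum_\lambda A_{\mu\lambda}u_\lambda$ forces $v'_\mu=\sum_\lambda (A^{-T})_{\mu\lambda}v_\lambda$, and the matrices cancel in the tensor. (To apply this in the setting of Proposition \ref{JP:cauchy} I treat $x$ and $y$ as two disjoint sets of variables so that the formulas are read as identities in $\Lambda^k\otimes\Lambda^k$.)

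Second I would apply this to two explicit dual systems. By the definition \eqref{JP_eq:scalar} the power sums $\{p_\lambda\}_{|\lambda|=k}$ are an orthogonal basis of $\Lambda^k$ with $\alsc{p_\lambda,p_\lambda}=z_\lambda\alpha^{l(\lambda)}$, so their dual basis is $\{p_\lambda/(z_\lambda\alpha^{l(\lambda)})\}$, giving one expression for $K_k$. By Theorem \ref{Jack:def} together with Theorem \ref{JP:newton_basis}, the Jack polynomials $\{\J{\lambda}\}_{|\lambda|=k}$ form an orthogonal basis of $\Lambda^k$ (the unitriangular relation to Schur polynomials preserves degree, so they span each graded piece), hence their dual basis is $\{\J{\lambda}/\alsc{\J{\lambda},\J{\lambda}}\}$, giving the other expression for $K_k$. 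Equating the two representations of $K_k$ yields the graded identity, the first equation of Proposition \ref{JP:cauchy}.

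Finally, summing over $k$ and passing to multiplicity coordinates $m_j=\#\{i:\lambda_i=j\}$, so that $p_\lambda=\prod_j p_j^{m_j}$, $l(\lambda)=\sum_j m_j$ and $z_\lambda=\prod_j j^{m_j}m_j!$, the series factorizes as
\[
\sum_{\lambda\in\Y}\frac{p_\lambda(x)p_\lambda(y)}{z_\lambda\alpha^{l(\lambda)}}=\prod_{j\ge 1}\sum_{m\ge 0}\frac{1}{m!}\left(\frac{p_j(x)p_j(y)}{j\alpha}\right)^m=\exp\left(\frac{1}{\alpha}\sum_{j\ge 1}\frac{p_j(x)p_j(y)}{j}\right),
\]
which gives the stated Cauchy identity. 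There is no real obstacle here beyond bookkeeping; the one point deserving attention is that the manipulations must be carried out inside the formal power series ring generated by the $p_j(x)$ and $p_j(y)$, where the exponential and the infinite product are well defined because each homogeneous component involves only finitely many $\lambda$.
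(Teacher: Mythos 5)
Your proposal is correct, and it is essentially the argument the paper points to: the paper gives no proof of its own but cites Macdonald, where the identity is obtained exactly by your dual-basis/reproducing-kernel computation (the $\{p_\lambda\}$ orthogonal with norms $z_\lambda\alpha^{l(\lambda)}$ on one side, the orthogonal Jack basis on the other, both homogeneous so the comparison can be made degree by degree), followed by the same multiplicity-coordinate factorization of the exponential. The only point worth stating explicitly is that $\alsc{\J{\lambda},\J{\lambda}}\ne 0$, which holds here since the form is positive definite for real $\alpha>0$.
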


In the following section, we show how these two statements imply the Gessel-type expansion (Theorem~\ref{intro:gessel}).

\subsection{Proof of Theorem \ref{intro:gessel} and Corollary \ref{res:cbe_th}}\label{ssec:cbe_th_proof}
\begin{proof}[Proof of Theorem \ref{intro:gessel}]

We begin by expanding the function $\exp\left(\sum_{j=1}^n f(\theta_j)\right)$, which is symmetric in the variables $z_j = e^{i\theta_j}$, into a series of Jack polynomials using the Cauchy identity. The desired Gessel-type formula will then emerge from this expansion after we apply Proposition~\ref{Jack:orth} to compute the expectations. 

\textbf{Jack polynomial expansion of $\exp\left(\sum_{j=1}^n f(\theta_j)\right)$}

First, we decompose $f$ into a constant term and its analytic parts inside and outside the unit circle. For~$z = e^{i\theta}$, we write
\begin{equation}\label{CLT_eq:wh_fact}
f(\theta) = \hat{f}_0 + \sum_{k\ge 1}\hat{f}_k z^k + \sum_{k\ge 1} \hat{f}_{-k}z^{-k} = \hat{f}_0+f_+(\theta)+f_-(\theta), \quad z=e^{i\theta}.
\end{equation}
Recall that by $\pi_n:\Lambda\to\Lambda_n$ we denoted the canonical projection. Using this projection, the exponential of the sum of functionals can be rewritten in terms of power sums $p_j$. A direct calculation yields
\[
\prod_{j=1}^n e^{f(\theta_j)} = e^{n\hat{f}_0}\exp\left(\alpha^{-1}\sum_{j =1}^\infty \frac{p_j(\rho_+)(\pi_np_j)(z_1, \ldots, z_n)}{j}+\alpha^{-1}\sum_{j=1}^\infty\frac{p_j(\rho_-)(\pi_np_j)(z_1^{-1}, \ldots, z_n^{-1})}{j}\right), \quad z_j=e^{i\theta_j},
\]
where $p_j(\rho_\pm) = \alpha j\hat{f}_{\pm j}$ for $j\ge 1$. These are precisely the homomorphisms defined in Theorem \ref{intro:gessel}.

Since the specializations $\rho_\pm$ and the projections $\pi_n$ are algebra homomorphisms, we can apply the Jack-polynomial Cauchy identity (Proposition \ref{JP:cauchy}) to expand each of the two exponential factors. This gives
\[
\exp\left(\alpha^{-1}\sum_{j =1}^\infty \frac{p_j(\rho_\pm)(\pi_np_j)(z_1^{\pm 1}, \ldots, z_n^{\pm 1})}{j}\right) = \sum_{\lambda}\frac{\J{\lambda}(\rho_\pm)(\pi_n\J{\lambda})(z_1^{\pm 1}, \ldots, z_n^{\pm 1})}{\alsc{\J{\lambda}, \J{\lambda}}},
\]
where, we recall, $\J{\lambda}$ denotes the Jack polynomial associated with the partition $\lambda$ (see Theorem \ref{Jack:def}), and~$\alsc{\cdot, \cdot}$ is the inner product \eqref{JP_eq:scalar}.

By the definition Jack polynomials are homogeneous of degree $\abs{\lambda}$. Consequently, the product of the two series above, when multiplied by $e^{n\hat{f}0}$, gives a power series expansion of $\prod_{j=1}^n e^{f(\theta_j)}$. If we assume that $f$ is holomorphic in a neighborhood of the unit circle, then this series converges uniformly on $\mathbb{T}^n$. Let us first proceed under this assumption.

\textbf{Proof for a holomorphic function $f$}

Uniform convergence permits us to interchange the sum over partitions with the integral defining the expectation $\mathbb{E}_{2/\alpha}^n$. The desired result then follows from the chain of equalities below:
\begin{equation}\label{CLT_eq:gessel_exp}
\begin{aligned}
e^{-n\hat{f}_0}\E_{2/\alpha}^n \prod_{j=1}^n e^{f(\theta_j)} = \sum_{\lambda, \mu}\frac{\J{\lambda}(\rho_+)\J{\mu}(\rho_-)}{\alsc{\J{\lambda}, \J{\lambda}}}\frac{\alscn{\pi_n\J{\lambda}, \pi_n\J{\mu}}}{\alsc{\J{\mu}, \J{\mu}}}&\\
 = \sum_{l(\lambda)\le n}\frac{\J{\lambda}(\rho_+)\J{\mu}(\rho_-)}{\alsc{\J{\lambda}, \J{\lambda}}}\A&,
 \end{aligned}
\end{equation}
where $\alscn{\cdot, \cdot}$ is defined in the formula \eqref{JP_eq:scalar_n} and the quantity $\A$ in the formula \eqref{JP_eq:A_def}.

The first equality is obtained by interchanging summation and integration and by using the property
\[
(\pi_n\J{\lambda})(z_1^{-1}, \ldots, z_n^{-1})= \overline{(\pi_n\J{\lambda})(z_1, \ldots, z_n)}, \quad \text{ for }z_j\in\T,
\]
which holds because $\J{\lambda}$ belongs to $\mathbb{Q}(\alpha)[p_0, p_1, \dots]$ by Theorem \ref{Jack:def}.

The second equality follows from substituting the result of Proposition \ref{Jack:orth}.

This completes the proof of Theorem \ref{intro:gessel} for functions whose Fourier coefficients decay exponentially.

\textbf{Absolute convergence of the Jack polynomial expansion}

Before extending our result to all functions in the $H^{1/2}(\T)$ Sobolev space, we first establish the second claim of the theorem concerning the absolute convergence of the Jack-polynomial series. Observe that $\A\le 1$ for $\alpha\ge 1$. Therefore, by the Cauchy-Bunyakovsky-Schwarz inequality, the Jack polynomial expansion is bounded by
\[
\sum_{\lambda} \abs*{\frac{\J{\lambda}(\rho_+)\J{\lambda}(\rho_-)}{\alsc{\J{\lambda}, \J{\lambda}}}}\le \sqrt{\sum_{\lambda} \frac{\J{\lambda}(\rho_+)\overline{\J{\lambda}(\rho_+)}}{\alsc{\J{\lambda}, \J{\lambda}}}}\sqrt{\sum_{\lambda} \frac{\J{\lambda}(\rho_-)\overline{\J{\lambda}(\rho_-)}}{\alsc{\J{\lambda}, \J{\lambda}}}}.
\]
From Proposition \ref{JP:cauchy}, we know that the right-hand side is finite if and only if
\[
\sum_{j\ge 1}\frac{\abs{p_j(\rho_\pm)}^2}{j} = \alpha^2\sum_{j\ge 1}j\abs{\hat{f}_{\pm j}}^2<+\infty.
\]
This condition is precisely the definition of $f$ belonging to the space $H^{1/2}(\mathbb{T})$. Therefore, the series converges absolutely under the theorem's hypothesis.

Under the assumption that $f$ is holomorphic and real-valued, we have
\begin{equation}\label{CLT_eq:subg}
\begin{aligned}
e^{-n\hat{f}_0}\E_{2/\alpha}^n \left[ \prod_{j=1}^n e^{f(\theta_j)} \right] = \sum_{l(\lambda)\le n} \frac{\J{\lambda}(\rho_+)\J{\lambda}(\rho_-)}{\alsc{\J{\lambda}, \J{\lambda}}}\A&\\
 \le \sum_\lambda\frac{\J{\lambda}(\rho_+)\J{\lambda}(\rho_-)}{\alsc{\J{\lambda}, \J{\lambda}}} = \exp\left(\frac{\alpha}{2}\normHT{1/2}{f}^2\right)&.
\end{aligned}
\end{equation}
Indeed, all of the terms in the Jack polynomial expansion are positive because $\hat{f}_{-j}=\overline{\hat{f}_j}$, so the middle inequality holds. The rightmost equality follows from Proposition \ref{JP:cauchy}.

\textbf{Proof for a general function $f\in H^{1/2}(\T)$}

We use a regularization argument. For $q \in (0,1)$, define a smoothed version of $f$:
\[
f_q(\theta) = \hat{f}_0 + \sum_{k\ge 1}q^k\hat{f}_kz^k + \sum_{k\ge 1}q^k\hat{f}_{-k}z^{-k}, \quad z=e^{i\theta}, q\in(0, 1).
\]
The function $f_q$ is holomorphic in a neighborhood of the unit circle, so its corresponding specializations $\rho_\pm^q$ (where $p_j(\rho_\pm^q) = q^j p_j(\rho_\pm)$) satisfy the expansion \eqref{CLT_eq:gessel_exp}. Our goal is to take the limit $q \to 1-$.

First, consider the right-hand side of \eqref{CLT_eq:gessel_exp} for $f_q$. Since Jack polynomials are homogeneous of degree $|\lambda|$, we have $J_\lambda(\rho_\pm^q) = q^{|\lambda|} J_\lambda(\rho_\pm)$. Using that $0 \le \mathcal{A}(\lambda) \le 1$ for $\alpha \ge 1$, we obtain the bound
\[
\sum_{l(\lambda)\le n}\abs*{\frac{\J{\lambda}(\rho_+^q)\J{\mu}(\rho_-^q)}{\alsc{\J{\lambda}, \J{\lambda}}}\A} = \sum_{l(\lambda)\le n}q^{2\abs{\lambda}}\abs*{\frac{\J{\lambda}(\rho_+)\J{\mu}(\rho_-)}{\alsc{\J{\lambda}, \J{\lambda}}}\A}\le \sum_{l(\lambda)\le n}\abs*{\frac{\J{\lambda}(\rho_+)\J{\mu}(\rho_-)}{\alsc{\J{\lambda}, \J{\lambda}}}}.
\]
The rightmost series converges by the second claim of the theorem. The Dominated Convergence Theorem then allows us to interchange the limit $q \to 1$ with the summation, showing that the right-hand side of \eqref{CLT_eq:gessel_exp} for $f_q$ converges to the same expression for the original $f$.

It remains to show that the left-hand side of \eqref{CLT_eq:gessel_exp} converges
$$
\E_{2/\alpha}^n \left[ \prod_{j=1}^n e^{f_q(\theta_j)} \right]\to \E_{2/\alpha}^n \left[ \prod_{j=1}^n e^{f(\theta_j)} \right], \quad \text{ as }q\to 1-.
$$

As $q\to 1-$, the sequence $f_q$ converges to $f$ in both $L^2(\T)$ and $H^{1/2}(\T)$. We may extract an almost surely convergent subsequence $f_{q_k}$, to which we apply Fatou's lemma and, using the inequality \eqref{CLT_eq:subg}, obtain
\begin{equation}\label{CLT_eq:subgaussianity}
\E_{2/\alpha}^n\abs*{\prod_{j=1}^ne^{f(\theta_j)}} = \E_{2/\alpha}^n\prod_{j=1}^ne^{\Re f(\theta_j)}\le\liminf_{k\to\infty} \E_{2/\alpha}^n\prod_{j=1}^ne^{\Re f_{q_k}(\theta_j)} \le e^{\frac{\alpha}{2}\normHT{1/2}{\Re f}^2+n\hat{f}_0}.
\end{equation}
By a standard argument, that subgaussian estimate implies the inequality
\begin{equation}\label{CLT_eq:var_est}
\E_{2/\alpha}^n\abs*{\sum_{j=1}^nf(\theta_j)-n\hat{f}_0}^2 \le C\normHT{1/2}{f}^2
\end{equation}
for some independent of $f$ constant.

To conclude the proof, we apply the Cauchy-Bunyakovsky-Schwarz inequality
\[
\E_{2/\alpha}\abs*{\prod_{j=1}^n e^{f_q(\theta_j)} - \prod_{j=1}^n e^{f(\theta_j)}} \le \sqrt{\E_{2/\alpha}^n \prod_{j=1}^n e^{2\Re(f_q(\theta_j)+f(\theta_j))}}\sqrt{\E_{2/\alpha}^n\abs*{\sum_{j=1}^n f_q(\theta_j)-f(\theta_j)}^2}.
\]
By the estimate \eqref{CLT_eq:subgaussianity}, the first factor on the right-hand side stays bounded as $q\to 1-$, while the second one converges to $0$ due to \eqref{CLT_eq:var_est}. Thus the right-hand side of \eqref{CLT_eq:gessel_exp} for $f_q$ converges to the one for $f$. Theorem \ref{intro:gessel} is proved.
\end{proof}

\begin{proof}[Proof of Corollary \ref{res:cbe_th}]
The second claim of Corollary \ref{res:cbe_th} is exactly the inequality \eqref{CLT_eq:subgaussianity}. The first claim then follows from the convergence of the Jack polynomial expansion
\[
e^{-n\hat{f}_0}\E_{2/\alpha}^n\prod_{j=1}^ne^{f(\theta_j)} = \sum_{l(\lambda)\le n}\frac{\J{\lambda}(\rho_+)\J{\mu}(\rho_-)}{\alsc{\J{\lambda}, \J{\lambda}}}\A \to \sum_{\lambda}\frac{\J{\lambda}(\rho_+)\J{\mu}(\rho_-)}{\alsc{\J{\lambda}, \J{\lambda}}},
\]
as $n\to \infty$, since $\A\to 1$. By Proposition \ref{JP:cauchy}, the limit equals
\[
\sum_{\lambda}\frac{\J{\lambda}(\rho_+)\J{\mu}(\rho_-)}{\alsc{\J{\lambda}, \J{\lambda}}} = \exp\left(\alpha^{-1}\sum_{j\ge 1}\frac{p_j(\rho_+)p_j(\rho_-)}{j}\right) = \exp\left(\alpha\sum_{j\ge 1}j\hat{f}_j\hat{f}_{-j}\right).
\]
This completes the proof of Corollary \ref{res:cbe_th}.
\end{proof}

\subsection{Proof of Theorem \ref{res:cbe_est}}\label{ssec:cbe_est_proof}

To establish Theorem \ref{res:cbe_est}, we determine the average size of a random partition under the Jack measure.
\begin{lemma}[{\cite[Proposition 1.3.1]{M_23}}]\label{CLT:diag_exp}
For a real-valued function $f\in H^{1}(\T)$, we have
\begin{equation}\label{CLT_eq:diag_exp}
\sum_{\lambda}\abs{\lambda}\frac{\J{\lambda}(\rho_+)\J{\lambda}(\rho_-)}{\alsc{\J{\lambda},\J{\lambda}}} = \frac{\alpha}{2}\normHT{1}{f}^2\exp\left(\frac{\alpha}{2}\normHT{1/2}{f}^2\right)
\end{equation}
\end{lemma}
\begin{proof}
Replacing $p_k$ with $q^kp_k$ in Proposition \ref{JP:cauchy} yields the formal series identity
\[
\sum_{\lambda}\frac{q^{\abs{\lambda}}p_\lambda(x)p_\lambda(y)}{z_\lambda\alpha^{l(\lambda)}} = \exp\left(\frac{1}{\alpha}\sum_{k\in\N}\frac{q^kp_k(x)p_k(y)}{k}\right).
\]
Next one differentiates both sides with respect to $q$ and evaluates at $q=1$ to deduce
\[
\sum_{\lambda}\abs{\lambda}\frac{p_\lambda(x)p_\lambda(y)}{z_\lambda\alpha^{l(\lambda)}} = \frac{1}{\alpha}\exp\left(\frac{1}{\alpha}\sum_{k\in\N}\frac{p_k(x)p_k(y)}{k}\right)\left(\sum_{k\in\N}p_k(x)p_k(y)\right).
\]
By the first claim of Proposition \ref{JP:cauchy}, the left-hand coincides with the left-hand side of \eqref{CLT_eq:diag_exp}. Substituting the specializations $\rho_\pm$ into the above identity concludes the desired equality.
\end{proof}

\begin{proof}[Proof of Theorem \ref{res:cbe_est}]
Throughout this proof set $\alpha=2/\beta$.

Using Theorem \ref{intro:gessel} and Proposition \ref{JP:cauchy}, we bound the difference between the expectation and its large-$n$ limit by three sums over partitions of different lengths
\begin{align*}
\abs*{\E_\beta^{2n} \left\{\prod_{j=1}^{2n} e^{f(\theta_j)-\hat{f}_0}\right\}-\exp\left(\frac{2}{\beta}\sum_{k\ge 1}k\hat{f}_k\hat{f}_{-k}\right)} \le \sum_{l(\lambda) \le n}\abs*{\frac{\J{\lambda}(\rho_+)\J{\lambda}(\rho_-)}{\alsc{ \J{\lambda},\J{\lambda}}}(\mathcal{A}_\lambda^\alpha(2n)-1)}\\
+ \sum_{n+1\le l(\lambda) \le 2n}\abs*{\frac{\J{\lambda}(\rho_+)\J{\lambda}(\rho_-)}{\alsc{ \J{\lambda},\J{\lambda}}}(\mathcal{A}_\lambda^\alpha(2n)-1)} + \sum_{l(\lambda) \ge 2n+1}\abs*{\frac{\J{\lambda}(\rho_+)\J{\lambda}(\rho_-)}{\alsc{ \J{\lambda},\J{\lambda}}}}.
\end{align*}
Since $0\le \mathcal{A}_\lambda^\alpha(2n)\le 1$ for $\alpha\ge 1$, the last two sums are bounded by
\[
Z_{\ge n+1} = \sum_{l(\lambda) \ge n+1}\abs*{\frac{\J{\lambda}(\rho_+)\J{\lambda}(\rho_-)}{\alsc{ \J{\lambda},\J{\lambda}}}}.
\]
We denote the first sum by $Z_{\le n}$. In what follows we establish the inequalities
\begin{align}
Z_{\le n} \le \frac{\alpha(\alpha-1)}{n}\exp\left(\frac{\alpha}{2}\normHT{1/2}{f}^2\right)\normHT{1}{f}^2,\label{CLT_eq:small_p}\\
Z_{\ge n+1} \le \frac{\alpha}{n}\exp\left(\frac{\alpha}{2}\normHT{1/2}{f}^2\right)\normHT{1}{f}^2,\label{CLT_eq:large_p}
\end{align}
which imply the desired claim.

\textbf{Proof of \eqref{CLT_eq:large_p}}

This inequality follows from the Rankin trick and the Cauchy-Bunyakovsky-Schwarz inequality
\begin{align*}
\sum_{l(\lambda) \ge n+1}\abs*{\frac{\J{\lambda}(\rho_+)\J{\lambda}(\rho_-)}{\alsc{ \J{\lambda},\J{\lambda}}}} \le \frac{1}{n}\sum_{\lambda}\abs{\lambda}\abs*{\frac{\J{\lambda}(\rho_+)\J{\lambda}(\rho_-)}{\alsc{ \J{\lambda},\J{\lambda}}}}
\\ \le \frac{1}{n}\sqrt{\sum_{\lambda}\abs{\lambda}\frac{\J{\lambda}(\rho_+)\overline{\J{\lambda}(\rho_+)}}{\alsc{ \J{\lambda},\J{\lambda}}}}\sqrt{\sum_{\lambda}\abs{\lambda}\frac{\J{\lambda}(\rho_-)\overline{\J{\lambda}(\rho_-)}}{\alsc{ \J{\lambda},\J{\lambda}}}}.
\end{align*}
Following the notation in \eqref{CLT_eq:wh_fact}, we decompose our function $f=\hat{f}_0 + f_++f_-$ into its constant term and analytic parts inside and outside the unit circle. In this notation the pairs of specializations $(\rho_\pm, \overline{\rho_\pm})$ correspond (in accordance with the definition given in Theorem \ref{intro:gessel}) to the functions $2\Re f_\pm$. Therefore, application of Lemma \ref{CLT:diag_exp} concludes the proof
\begin{align*}
\sum_{\lambda}\abs{\lambda}\frac{\J{\lambda}(\rho_\pm)\overline{\J{\lambda}(\rho_\pm)}}{\alsc{ \J{\lambda},\J{\lambda}}} = 2\alpha\normHT{1}{\Re f_\pm}^2\exp\left(2\alpha\normHT{1/2}{\Re f_\pm}^2\right)\\
\le \alpha\normHT{1}{f}^2\exp\left(\alpha\normHT{1/2}{f_\pm}^2\right).
\end{align*}

\textbf{Proof of \eqref{CLT_eq:small_p}}

For $a_1, \ldots, a_k\in(0, 1)$, we have the inequality
\[
1-\prod_{j=1}^k(1-a_j)\le \sum_{j=1}^ka_j.
\]
Consequently, if $l(\lambda)\le n$, the bound
\[
\abs{\mathcal{A}_\lambda^\alpha(2n)-1} \le (\alpha-1)\sum_{(m, l)\in\lambda} \frac{1}{2n+l\alpha-m} \le \frac{\alpha-1}{n}\abs{\lambda}
\]
holds.

We substitute this estimate into the sum and use the Cauchy-Bunyakovsky-Schwarz inequality to obtain
\begin{align*}
\sum_{l(\lambda) \le n}\abs*{\frac{\J{\lambda}(\rho_+)\J{\lambda}(\rho_-)}{\alsc{ \J{\lambda},\J{\lambda}}}(\mathcal{A}_\lambda^\alpha(2n)-1)} \le\frac{\alpha -1}{n}\sum_{\lambda\in\Y}\abs{\lambda}\abs*{\frac{\J{\lambda}(\rho_+)\J{\lambda}(\rho_-)}{\alsc{ \J{\lambda},\J{\lambda}}}}\\
\le \frac{\alpha-1}{n}\sqrt{\sum_{\lambda\in\Y}\abs{\lambda}\frac{\J{\lambda}(\rho_+)\overline{\J{\lambda}(\rho_+)}}{\alsc{ \J{\lambda},\J{\lambda}}}}\sqrt{\sum_{\lambda\in\Y}\abs{\lambda}\frac{\J{\lambda}(\rho_-)\overline{\J{\lambda}(\rho_-)}}{\alsc{ \J{\lambda},\J{\lambda}}}}.
\end{align*}
Last, as above, Lemma \ref{CLT:diag_exp} is applied to the rightmost expression. Inequality \eqref{CLT_eq:small_p} is proved.
\end{proof}

\section{Proof of Theorem \ref{res:sb_th}}\label{sec:cbe_th_proof}

In this section we explain how Theorem \ref{res:sb_th} follows from Theorem \ref{res:cbe_est}, Corollary \ref{res:cbe_th} and Proposition \ref{outline:convergence}. We first proceed under more restrictive assumptions on $f$.
\begin{lemma}\label{sb_CLT:preliminary}
Theorem \ref{res:sb_th} holds for compactly supported smooth functions on the real line.
\end{lemma}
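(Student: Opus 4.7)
The plan, indicated in the paragraph preceding Theorem \ref{res:sb_th}, is to specialize the CBE inequality \eqref{res_eq:cbe_est} and the subgaussianity bound \eqref{intro_eq:subgauss} to the rescaled test function $g_n(\theta) := f(2n\theta)$, and then pass to the sine-$\beta$ limit using Proposition \ref{outline:convergence}. Let $f\in C_c^\infty(\R)$ with $\supp(f)\subset[-M,M]$; for $n>M/(2\pi)$ the function $g_n$ is smooth and supported in $(-\pi,\pi)$, hence lies in $H_1(\T)$, and Theorem \ref{res:cbe_est} applies to $g_n$ with $2n$ particles.

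A direct change of variables identifies the torus Fourier coefficients as samples $\widehat{g_n}_k = c\,\hat{f}(\lambda_k)$, $\lambda_k=k/(2n)$, where $c$ is a convention-dependent constant. Consequently each sum over $k$ appearing in the CBE inequality is a Riemann sum of spacing $1/(2n)$ for the corresponding integral in $\hat{f}$. Since $f$ is Schwartz, $\hat{f}$ decays rapidly and dominated convergence yields
\[
2n\widehat{g_n}_0 \to \E_\beta S_f, \qquad \sum_{k\ge 1} k\widehat{g_n}_k\widehat{g_n}_{-k} \to \int_0^\infty \lambda\,\hat{f}(\lambda)\hat{f}(-\lambda)\,d\lambda,
\]
\[
\normHT{1/2}{g_n}^2 \to \normHR{1/2}{f}^2, \qquad \tfrac{1}{n}\normHT{1}{g_n}^2 \to c'\normHR{1}{f}^2,
\]
with the constant $c'$ determined by the Fourier normalization.

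Proposition \ref{outline:convergence}, applied with $2n$ particles, gives $\E_\beta^{2n}\prod_j e^{g_n(\theta_j)} \to \E_\beta e^{S_f}$. Combining this convergence with the Fourier-side limits above and using continuity of the exponential, one obtains (after centering by the converging mean $2n\widehat{g_n}_0$) that the left-hand side of the CBE inequality for $g_n$ converges to
\[
\abs*{\E_\beta e^{\overline{S}_f}\exp\left(-\tfrac{2}{\beta}\int_0^\infty \lambda\hat{f}(\lambda)\hat{f}(-\lambda)\,d\lambda\right) - 1},
\]
while the right-hand side of the CBE inequality converges to the quantity stated in \eqref{res_eq:sb_th}. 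This proves the first inequality of Theorem \ref{res:sb_th} for $f\in C_c^\infty(\R)$. The subgaussian bound for real-valued $f$ follows by the same substitution $g_n$ into \eqref{intro_eq:subgauss}, applying Proposition \ref{outline:convergence} on the left, and taking the Riemann-sum limit of the $H_{1/2}$ seminorm on the right.

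The main technical obstacle is the bookkeeping of the $2\pi$ and $2n$ factors coming from the rescaling, so that the constant on the right-hand side of the limiting inequality matches the one in \eqref{res_eq:sb_th}. A secondary concern is that Proposition \ref{outline:convergence} only addresses the multiplicative functional $\E_\beta^{2n}e^{S_{g_n}}$, while the CBE inequality contains an additional $n$-dependent exponential renormalization factor; this is handled by noting that the exponent itself converges to a finite limit, so the two factors may be multiplied in the limit, and the uniform subgaussianity of Corollary \ref{res:cbe_th} rules out any uniform integrability issue.
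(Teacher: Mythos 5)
Your proposal matches the paper's proof essentially step for step: substitute $f(2n\cdot)$ into \eqref{res_eq:cbe_est} and \eqref{intro_eq:subgauss} with $2n$ particles, invoke Proposition \ref{outline:convergence} for the convergence of the multiplicative functional, and verify the Riemann-sum limits of the Fourier-side quantities (mean, limiting variance, $H_{1/2}$ and rescaled $H_1$ seminorms). The paper likewise leaves the constant bookkeeping as a direct verification, so your treatment is at the same level of detail and is correct.
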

\begin{proof}
To avoid confusion, we clarify the notation used throughout the proof.
We consider the functions $f(j\cdot)$, with $j\in\N$, as functions on both the real line and, through the identification $f(j\theta)$, with $z=e^{i\theta}$ and $\theta\in(-\pi, \pi)$, on the unit circle. When $f$ is smooth and compactly supported, the function $f(j\theta)$ is smooth on $\T$ for sufficiently large $j$ (specifically, once the support of $f(j\cdot)$ is contained in $[-\pi, \pi]$).
 
We denote by $\widehat{f(j\cdot)}_k$ the $k$-th Fourier coefficient when the function is viewed as a function on the unit circle $\T$, and by $\hat{f}(\lambda)$ the Fourier transform at $\lambda\in \R$ when $f$ is considered on $\R$. For sufficiently large $j$, these are related by the identity:
\begin{equation}\label{sb_CLT_eq:fourier}
\widehat{f(j\cdot)}_k = \frac{1}{j}\hat{f}\left(\frac{k}{j}\right).
\end{equation}

We turn to the proof of the lemma. We substitute the sequence of functions $f(2n\cdot)$ into the inequality \eqref{res_eq:cbe_est} and consider the limit $n\to\infty$. Proposition \ref{outline:convergence} gives that the expectations converge
\[
\E_\beta^{2n} \left\{\prod_{j=1}^{2n} e^{f(\theta_j)}\right\}\to \E_\beta e^{S_f},\quad \text{ as }n\to\infty.
\]
To establish the convergence of the left-hand side of inequality \eqref{res_eq:cbe_est} to the left-hand side of \eqref{res_eq:sb_th}, we verify the convergence of the expressions entering the former. A direct calculation yields this convergence for the average
\[
2n\widehat{f(2n\cdot)}_0 = \frac{1}{2\pi}\int_{-\pi n}^{\pi n}f(x)dx \to \frac{1}{2\pi}\int_\R f(x)dx = \E_\beta S_f.
\]
The convergence of the Gaussian factor, which is, by the identity \eqref{sb_CLT_eq:fourier}, equal to the exponential of
\[
\sum_{j\ge 0}j\widehat{f(2n\cdot)}_{j}\widehat{f(2n\cdot)}_{-j} = \sum_{j\ge 0}\frac{j}{(2n)^2}\hat{f}\left(\frac{j}{2n}\right)\hat{f}\left(-\frac{j}{2n}\right),
\]
follows from this factor being the Darboux sum for the integral
\[
\int_0^\infty \lambda\hat{f}(\lambda)\hat{f}(-\lambda)d\lambda.
\]

The same argument gives
\begin{equation*}
\begin{aligned}
&\normHT{1/2}{f(2n\cdot)}^2\to \normHR{1/2}{f}^2,\\
&\frac{1}{n}\normHT{1}{f(2n\cdot)}^2\to 2\normHR{1}{f}^2,
\end{aligned}
\quad\text{ as }n\to\infty,
\end{equation*}.
which implies the convergence of the right-hand side of inequality \eqref{res_eq:cbe_est} to the right-hand side of \eqref{res_eq:sb_th}.

The proof of the inequality
\[
\E_\beta e^{\overline{S}_f} \le e^{\frac{1}{\beta}\normHR{1/2}{f}}
\]
for a real-valued $f$ follows similarly from the subgaussian estimate in Corollary \ref{res:cbe_th}.
\end{proof}

\begin{proof}[Proof of Theorem \ref{res:sb_th}]
The desired claim follows by extending the inequality \eqref{res_eq:sb_th} by continuity. According to Lemma \ref{sb_CLT:preliminary}, the inequality holds for smooth compactly supported functions, and these are dense in $H^1(\R)$.

The continuity of all of the expressions entering \eqref{res_eq:sb_th} is straightforward, with the exception of the expectation
$\E_\beta e^{\overline{S}_f}$. We defer the analysis of this expectation (Lemma \ref{sb_CLT:reg_cont}) and the definition of $\overline{S}_f$ for general $f\in H^{1/2}(\R)$ (Definition \ref{def:reg}) to the next subsection. In particular, Lemma \ref{sb_CLT:reg_cont} completes the proof.

The proof of the subgaussian estimate is similar.
\end{proof}

\subsection{Regularization of additive functionals}\label{ssec:regularization}
For a general point process, only additive functionals defined by compactly supported bounded functions are guaranteed to be almost surely finite. In many cases, however, the class of admissible functions may be extended. For example, if the process is translationally invariant, an additive functional may be assigned to any absolutely integrable function, as the first correlation function is constant.

One may attempt to go even further and define the sum
\[
\sum_{x\in X}\frac{1}{i+x}, \quad X\in \Conf(\R),
\]
which, evidently, diverges almsot surely for translationally invariant process. Nevertheless, it is possible for the series to converge in an improper sense:
\[
\lim_{n\to\infty}\sum_{x\in X, \abs{x}\le k_n}\frac{1}{i+x}, \quad X\in \Conf(\R),
\]
for some sequence $k_n\to \infty$ as $n\to\infty$. In other words, while the additive functional $S_{1/(x+i)}$ is not well-defined as an absolutely convergent sum, the limit of the truncated functionals $S_{1/(x+i)\I_{[-k_n, k_n]}}$ may still exist as $n\to\infty$. Such regularization of additive functionals was introduced by Bufetov \cite{B_18} and then applied to the construction of Radon-Nikodym derivatives between Palm measures of determinantal point process.

\begin{proposition}\label{sb_CLT:regularization}
There exists a constant $C$ such that for a smooth compactly supported function $f$ we have
\[
\E_\beta \abs{\overline{S}_f}^2 \le C\normHR{1/2}{f}^2.
\]
\end{proposition}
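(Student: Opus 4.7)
The plan is to transfer the subgaussian inequality from Corollary \ref{res:cbe_th} on the circular $\beta$-ensemble side to the sine-$\beta$ process via Proposition \ref{outline:convergence}, and then read off the $L_2$-bound by Taylor expansion at zero.

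First I would reduce to the case of a real-valued $f$. Writing $f = f_1 + i f_2$ with $f_1, f_2$ smooth, real-valued and compactly supported, one has $\abs{\overline{S}_f}^2 = \overline{S}_{f_1}^2 + \overline{S}_{f_2}^2$. A direct Fourier calculation gives $\normHR{1/2}{f}^2 = \normHR{1/2}{f_1}^2 + \normHR{1/2}{f_2}^2$, since the cross term involves $\int_{\R}\abs{\lambda}\,\Im\bigl(\hat{f_2}(\lambda)\overline{\hat{f_1}(\lambda)}\bigr)\,d\lambda$, whose integrand is odd. Thus it suffices to prove the estimate for a real-valued $f$.

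For such an $f$ and every $t \in \R$, the function $tf(2n\cdot)$ may, for $n$ large enough, be regarded as a real-valued function on the unit circle. The subgaussian bound \eqref{intro_eq:subgauss} of Corollary \ref{res:cbe_th}, applied to $tf(2n\cdot)$, gives the uniform-in-$n$ inequality
\[
\E_\beta^{2n}\exp\bigl(t\,\overline{S}_{f(2n\cdot)}\bigr) \le \exp\left(\frac{t^2}{\beta}\normHT{1/2}{f(2n\cdot)}^2\right).
\]
By the convergences established in the proof of Lemma \ref{sb_CLT:preliminary}, the exponent on the right converges to $t^2\normHR{1/2}{f}^2/\beta$, while Proposition \ref{outline:convergence}, applied to $tf$ (which is again smooth and compactly supported), yields convergence of the left-hand side to $\E_\beta\exp(t\,\overline{S}_f)$. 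Hence, for every $t \in \R$,
\[
\E_\beta\exp\bigl(t\,\overline{S}_f\bigr) \le \exp\left(\frac{t^2}{\beta}\normHR{1/2}{f}^2\right).
\]

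To extract the variance bound I would invoke Fatou's lemma on the pointwise identity $\overline{S}_f^2 = \lim_{t\to 0}\tfrac{1}{t^2}\bigl(e^{t\overline{S}_f}+e^{-t\overline{S}_f}-2\bigr)$, which gives
\[
\E_\beta\,\overline{S}_f^2 \le \liminf_{t\to 0}\frac{1}{t^2}\Bigl(\E_\beta e^{t\overline{S}_f}+\E_\beta e^{-t\overline{S}_f}-2\Bigr) \le \liminf_{t\to 0}\frac{2}{t^2}\bigl(e^{t^2\normHR{1/2}{f}^2/\beta}-1\bigr) = \frac{2}{\beta}\normHR{1/2}{f}^2.
\]
Combined with the real/imaginary splitting, this yields the proposition with $C = 2/\beta$. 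The main obstacle is really only bookkeeping: the subgaussian bound and the convergence $\E_\beta^{2n} e^{S_{f(2n\cdot)}} \to \E_\beta e^{S_f}$ are in place, and the Fatou step needs no uniformity in $t$ since Proposition \ref{outline:convergence} is invoked at each fixed $t$ separately before passing $t \to 0$.
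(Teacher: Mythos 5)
Your proposal is correct and follows essentially the same route as the paper: reduce to real-valued $f$, transfer the subgaussian Laplace-transform bound \eqref{intro_eq:subgauss} to the sine-$\beta$ process via Proposition \ref{outline:convergence} (this is exactly the subgaussianity recorded in Lemma \ref{sb_CLT:preliminary}), and deduce the second-moment bound. The only difference is that the paper obtains the last step by citing the standard equivalence of subgaussian characterizations (Proposition \ref{KSC:subgauss}), whereas you carry it out explicitly with Fatou and a Taylor expansion at $t=0$; both are valid.
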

\begin{proof}
It is sufficient to assume that $f$ is real-valued. A standard calculation implies the desired claim from the subgaussian estimate established in Lemma \ref{sb_CLT:preliminary}.
\end{proof}
Proposition \ref{sb_CLT:regularization} justifies the following definition.
\begin{definition}\label{def:reg}
For a function $f\in H^{1/2}(\R)$, define the respective regularized additive functional $\overline{S}_f$ as the $L^2(\P_\beta, \Conf(\R))$ limit
\[
\overline{S_f} = \lim_{k\to\infty}\overline{S_{f_k}},
\]
for some sequence $f_k$ of compactly supported smooth functions, approximating $f$ in $H^{1/2}(\R)$ norm.
\end{definition}
\begin{remark}
We note that the result of Proposition \ref{sb_CLT:regularization} does not involve $L^2(\R)$ norm. Therefore, one may extend this definition to functions $f$ such that $\normHR{1/2}{f}<+\infty$.
\end{remark}

The following statement concludes the proof of Theorem \ref{res:sb_th}.
\begin{lemma}\label{sb_CLT:reg_cont}
The expectation $\E_\beta e^{\overline{S}_f}$ is continuous with respect to the $1/2$-Sobolev seminorm~$\normHR{1/2}{f}$.
\end{lemma}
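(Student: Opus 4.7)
The plan is to prove continuity along any sequence $f_k$ converging to $f$ in the $\dot H_{1/2}$-seminorm, using Vitali's convergence theorem: I would establish convergence in probability of $e^{\overline{S}_{f_k}}$ to $e^{\overline{S}_f}$ together with uniform integrability of the family $\{e^{\overline{S}_{f_k}}\}$.

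Convergence in probability is immediate from Proposition \ref{sb_CLT:regularization}. By the defining $L_2$-approximation and linearity, that proposition extends to give $\|\overline{S}_{g}\|_{L_2(\P_\beta)} \le \sqrt{C}\,\normHR{1/2}{g}$ for every $g \in H_{1/2}(\R)$; applied to $g = f_k - f$ this yields $\|\overline{S}_{f_k}-\overline{S}_f\|_{L_2}\to 0$, so $\overline{S}_{f_k} \to \overline{S}_f$ in probability and hence $e^{\overline{S}_{f_k}} \to e^{\overline{S}_f}$ in probability as well.

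For uniform integrability, I would first upgrade the subgaussian inequality of Theorem \ref{res:sb_th} — currently stated in Lemma \ref{sb_CLT:preliminary} only for smooth compactly supported real-valued functions — to arbitrary real-valued $u \in H_{1/2}(\R)$. If $u_n$ are smooth compactly supported and $u_n \to u$ in $H_{1/2}(\R)$, then $\overline{S}_{u_n} \to \overline{S}_u$ in $L_2$ hence a.s.\ along a subsequence, and Fatou's lemma yields
\[
\E_\beta e^{\overline{S}_u} \le \liminf_n \E_\beta e^{\overline{S}_{u_n}} \le \liminf_n \exp\!\left(\tfrac{1}{\beta}\normHR{1/2}{u_n}^2\right) = \exp\!\left(\tfrac{1}{\beta}\normHR{1/2}{u}^2\right).
\]
Applied to $u = p\Re f$ for a complex-valued $f \in H_{1/2}(\R)$, this produces the uniform $L_p$ estimate
\[
\E_\beta \abs{e^{\overline{S}_f}}^p = \E_\beta e^{\overline{S}_{p\Re f}} \le \exp\!\left(\tfrac{p^2}{\beta}\normHR{1/2}{f}^2\right).
\]
Since $\normHR{1/2}{f_k}$ stays bounded along a convergent sequence, taking $p = 2$ gives $\sup_k \E_\beta \abs{e^{\overline{S}_{f_k}}}^2 < \infty$, which implies uniform integrability.

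Vitali's convergence theorem then yields $e^{\overline{S}_{f_k}} \to e^{\overline{S}_f}$ in $L_1(\P_\beta)$, which is the claimed continuity of $\E_\beta e^{\overline{S}_f}$. The main technical obstacle is the Fatou-based upgrade of the subgaussian inequality to all of $H_{1/2}(\R)$: it is this bound (with a power $p>1$) that controls the tails of $e^{\overline{S}_{f_k}}$ uniformly in $k$; once it is in hand, the rest is a standard convergence-in-probability plus uniform-integrability argument.
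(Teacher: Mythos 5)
Your argument is correct, and it rests on exactly the same two ingredients as the paper's proof --- the $L_2$ bound $\E_\beta\abs{\overline{S}_{g}}^2\le C\normHR{1/2}{g}^2$ of Proposition \ref{sb_CLT:regularization} and the subgaussian exponential-moment bound $\E_\beta e^{\overline{S}_{u}}\le e^{\frac{1}{\beta}\normHR{1/2}{u}^2}$ --- but it packages them differently. The paper combines them in one line: from the elementary inequality $\abs{e^{a}-e^{b}}\le \abs{a-b}\,(e^{\Re a}+e^{\Re b})$ and the Cauchy--Bunyakovsky--Schwarz inequality one gets
\[
\abs*{\E_\beta e^{\overline{S}_f}-\E_\beta e^{\overline{S}_g}} \le \sqrt{\E_\beta e^{2S_{\Re f+\Re g}}}\,\sqrt{\E_\beta \abs*{\overline{S}_{f-g}}^2}\le \sqrt{C}\,e^{\frac{1}{\beta}\normHR{1/2}{2\Re f+2\Re g}^2}\normHR{1/2}{f-g},
\]
which is an explicit Lipschitz-type modulus of continuity on seminorm-bounded sets. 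Your route through Vitali's theorem (convergence in probability from the $L_2$ bound, uniform integrability from the $L_2$-moment bound $\E_\beta\abs{e^{\overline{S}_{f_k}}}^2=\E_\beta e^{\overline{S}_{2\Re f_k}}$) yields only sequential continuity, with no rate; on the other hand, your Fatou-based upgrade of the subgaussian inequality from smooth compactly supported functions to all real-valued $u\in H_{1/2}(\R)$ is a step the paper uses implicitly (its first factor $\E_\beta e^{2S_{\Re f+\Re g}}$ must be bounded for general $H_{1/2}$ arguments) but does not spell out, so your write-up is in that respect more complete. Both proofs need the identities $\overline{S}_{f}-\overline{S}_{g}=\overline{S}_{f-g}$ and $\Re\,\overline{S}_f=\overline{S}_{\Re f}$ for the regularized functionals, which pass to the $L_2$ limit from the compactly supported case exactly as you indicate.
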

\begin{proof}
Indeed, by the Cauchy-Bunyakovsky-Schwarz inequality and Proposition \ref{sb_CLT:regularization} we have
\[
\abs*{\E_\beta e^{\overline{S}_f}-\E_\beta e^{\overline{S}_g}} \le \sqrt{\E_\beta e^{2S_{\Re f+\Re g}}}\sqrt{\E_\beta \abs{\overline{S}_{f-g}}^2} \le \sqrt{C}e^{2\normHR{1/2}{\Re f+\Re g}^2}\normHR{1/2}{f-g}.
\]
\end{proof}

\subsection{Proof of Theorem \ref{res:opt_conv}}\label{ssec:opt_proof}

We approximate the function $f$ by a family of compactly supported smooth functions $f_\varepsilon$ in the $1/2$-Sobolev seminorm. This approximation is uniform in the scaling parameter $R$, as the seminorm is invariant under this scaling:
\[
\normHR{1/2}{f(\cdot/R)-f_\varepsilon(\cdot/R)} = \normHR{1/2}{f(\cdot)-f_\varepsilon(\cdot)}.
\]
The approximating functions $f_\varepsilon$ are chosen so that, for all $R$, the following two inequalities are satisfied:
\begin{align*}
&\abs{\E_\beta e^{\overline{S}_{f(\cdot/R)}}- \E_\beta e^{\overline{S}_{f_\varepsilon(\cdot/R)}}} <\varepsilon/3,\\
&\abs*{e^{\frac{2}{\beta}\int_0^\infty\lambda\hat{f}(\lambda)\hat{f}(-\lambda)d\lambda} - e^{\frac{2}{\beta}\int_0^\infty\lambda\hat{f}_\varepsilon(\lambda)\hat{f}_\varepsilon(-\lambda)d\lambda}} <\varepsilon/3.
\end{align*}
The existence of such an approximation is guaranteed by Lemma \ref{sb_CLT:reg_cont}.

We apply Theorem \ref{res:sb_th} to the smooth, compactly supported function $f_\varepsilon$. This theorem implies that for all sufficiently large $R$,
\[
\abs*{\E_\beta e^{\overline{S}_{f_\varepsilon(\cdot/R)}} -e^{\frac{2}{\beta}\int_0^\infty\lambda\hat{f}_\varepsilon(\lambda)\hat{f}_\varepsilon(-\lambda)d\lambda}} < \varepsilon/3.
\]

Finally, by combining the three estimates above via the triangle inequality, we arrive at the desired result: for all sufficiently large $R$,
\[
\abs*{\E_\beta e^{\overline{S}_{f(\cdot/R)}}- e^{\frac{2}{\beta}\int_0^\infty\lambda\hat{f}(\lambda)\hat{f}(-\lambda)d\lambda} }<\varepsilon.
\]

\section{Proof of Proposition \ref{outline:convergence}}\label{sec:sine_def}

The space of configurations $\Conf(\R)$ is a set of locally finite subsets of the real line with multiplicities. We endow it with the vague topology, i. e., the coarsest topology with respect to which the maps
\[
\Conf(\R)\to \C, \quad X\mapsto S_g(X) = \sum_{x\in X}g(x)
\]
are continuous for all continuous compactly supported functions $g$ on the real line. Equipped with the vague topology, the space $\Conf(\R)$ is Polish. The vague topology is induced by the metric
\begin{equation}\label{eq:pp_metric}
d(X, Y) = \sum_{j\in \N}2^{-j}\frac{\abs{S_{g_j}(X)-S_{g_j}(Y)}}{1+\abs{S_{g_j}(X)-S_{g_j}(Y)}},
\end{equation}
where $\{g_j\}_{j\in\N}$ is a dense countable subset in the space of continuous compactly supported functions. A point process is a Borel probability measure on the space of configurations. Proposition \ref{outline:convergence} follows from the following general statement.
\begin{proposition}\label{unb_conv}
Let $\{\P_n\}_{n\in \N}$ be a tight sequence of point processes. Assume that the sequence converges to a point process $\P$ in the sense of Laplace functionals, i.e.,
\[
\E_n \left\{\prod_{x\in X}e^{-f(x)}\right\} \to \E \left\{\prod_{x\in X}e^{-f(x)}\right\}, \quad \text{ as }n\to\infty
\]
for any smooth, compactly supported, non-negative function $f$. Then these point processes converge to $\P$ weakly.

Assume, furthermore, that $G$ is a continuous (not necessarily bounded) function on $\Conf(\R)$ satisfying
\begin{equation}\label{eq:unb_conv}
\E_n \abs{G} \le C, \quad \E_n \abs{G}(\log_+\abs{G})^{1+\varepsilon}\le C.
\end{equation}
Then $G\in L_1(\Conf(\R), \P)$ and
\[
\E_n G \to \E G.
\]
\end{proposition}

We proceed to the proof of Proposition \ref{outline:convergence}.

Denote by $\tilde\P_\beta^n$ the image of the circular $\beta$-ensemble under the dilation $\theta\mapsto n\theta$ so that the equality
\[
\E_\beta^n \prod_{j=1}^n e^{f(n\theta_j)}=\tilde\E_\beta^n \prod_{j=1}^n e^{f(\theta_j)}
\]
holds. The convergence of $\tilde\P_\beta^n$ in the sense of Laplace functionals was established by Killip and Stoiciu \cite[Theorem 1.7, Definition 1.6]{KS_09}. We substitute the function
\[
G(X) = \prod_{x\in X}e^{f(x)}, \quad X\in\Conf(\R)
\]
into Proposition \ref{unb_conv}. This implies the desired convergence once the requirements \eqref{eq:unb_conv} of Proposition \ref{unb_conv} are verified.

Let us show that the condition \eqref{eq:unb_conv} is satisfied. We may bound $\Re f$ by a smooth positive compactly supported function $g$. We have the respective bound for the multiplicative functionals
\[
\abs{G(X)} \le \prod_{x\in X}e^{g(x)}.
\]
The expectation of the right-hand side under $\tilde\P_\beta^n$ is bounded by the second claim of Corollary \ref{res:cbe_th} (see also proof of Lemma \ref{sb_CLT:preliminary}).

To prove the second inequality in \eqref{eq:unb_conv}, we also require that $\abs{f}\le \abs{g}$ to get the bound
\[
\abs{G(X)}(\log_+(\abs{G(x)}))^{1+\varepsilon} \le \left(\sum_{x\in X}g(x)\right)^{1+\varepsilon}\prod_{x\in X}e^{g(x)}.
\]
We set $\varepsilon=1$ and use that
\[
\tilde \E_\beta^n \left(\sum_{j=1}^n g(\theta_j)\right)^2\prod_{j=1}^n e^{g(\theta_j)} = \left(\frac{d}{d\lambda}\right)^2\bigg |_{\lambda=0}\tilde\E_\beta^n\prod_{j=1}^n e^{\lambda g(\theta_j)}.
\]
The function on the right-hand side is holomorphic in $\lambda$ and the Cauchy formula may be used. Observe that the second part of Corollary \ref{res:cbe_th} gives a bound on the expectation on the right-hand side that is locally uniform in $\lambda$. This implies the boundedness of the left-hand side. We conclude that the condition \eqref{eq:unb_conv} holds.

It is remaining to show that $\{\tilde\P_\beta^n\}_{n\in\N}$ are tight. For a configuration $X$ and a subset $A\subset \R$, set $$\#_A(X)=\abs{A\cap X}.$$ The desired tightness follows from
\begin{proposition}\label{pp:tightness}
A family of point processes $\{\P_\alpha\}$ is tight if so are the distributions of the induced random variables $\#_{[-R, R]}$ for any fixed $R>0$.
\end{proposition}
By Markov's inequality, tightness of the random variables $\#_{[-R, R]}$ follows from their uniform integrability. Their expectation under a circular $\beta$-ensemble follows immediately from the translational invariance:
\[
\tilde\E_\beta^n\#_{[-R, R]} = \frac{R}{\pi}, \quad \text{ for any }n\ge \frac{R}{\pi}.
\]
By Proposition \ref{pp:tightness}, this implies tightness of $\{\tilde\P_\beta^n\}_{n\in\N}$. Proposition \ref{outline:convergence} is proved.

We proceed to the proofs of Propositions \ref{unb_conv}, \ref{pp:tightness}.

\begin{proof}[Proof of Proposition \ref{unb_conv}]
Since weak convergence is metrizable, it suffices to identify the limit of every weakly convergent subsequence.
Assume some subsequence converges weakly to a different measure $\tilde \P$. Weak convergence implies convergence of Laplace functionals. Therefore, these coincide for both measures:
\[
 \E \left\{\prod_{x\in X}e^{-f(x)}\right\} =  \tilde\E \left\{\prod_{x\in X}e^{-f(x)}\right\}, \quad f\in C_c^\infty(\R), f\ge 0.
\]
However, Laplace functionals uniquely determine the law of a point process. We conclude that $\P=\tilde\P$. This proves the weak convergence.

We proceed to the second claim. For $\delta>0$, we introduce a regularization 
\[
G_\delta = G e^{-\delta(\log_+\abs{G})^{1+\varepsilon}}.
\]
The function $G_\delta$ is bounded and continuous. The first claim implies the bound
\[
\E\abs{G_\delta} = \lim_{n\to\infty}\E_n\abs{G_\delta} \le \lim_{n\to\infty}\E_n\abs{G} \le C.
\]
By the Monotone Convergence Theorem,
\[
C\ge \E\abs{G_\delta} \to \E\abs{G}, \quad{ as }\delta\to 0.
\]
Thus making $\delta$ approach zero proves that $G\in L_1(\Conf(\R), \P)$.

Last, we establish the desired convergence. The required bounds \eqref{eq:unb_conv} imply that our regularization is uniform in $n$ in the sense that
\[
\E_n\abs{G-G_\delta} \le \delta\E_n\abs{G}(\log_+\abs{G})^{1+\varepsilon} \le \delta C.
\]
Further, we have that the convergence
\[
\E G_\delta \to \E G
\]
holds as $\delta\to 0$ by the Monotone Convergence Theorem, since $\abs{G_\delta}\le \abs{G}$. Thus the difference
\[
\abs{\E_nG -\E G} \le \delta C +\abs{\E_n G_\delta -\E G_\delta} +\abs{\E G_\delta-\E G}
\]
may be made arbitrarily small for all sufficiently large $n$ by choosing sufficiently small $\delta$.
\end{proof}

Before proving Proposition \ref{pp:tightness}, we identify compacts in the space of configurations.

\begin{proposition}\label{pp:compact_crit}
Assume that the functions $\#_{[-R, R]}(-)$ are bounded on a subset $K\subset \Conf(\R)$ for any $R>0$. Then the subset $K$ is precompact.
\end{proposition}
\begin{proof}
Let the space $\C^\N$ be endowed with the product topology. Recall that the latter is induced by the metric
\[
d(\xi, \eta) = \sum_{j\in \N}2^{-j}\frac{\abs{\xi_j-\eta_j}}{1+\abs{\xi_j-\eta_j}},
\]
where $\xi_j$, $\eta_j$ are coordinates of the vectors $\xi, \eta \in \C^\N$. The map
\[
\Conf(\R) \to \C^{\N}, \quad X\mapsto \{S_{g_j}(X)\}_{j\in\N}
\]
is thus an isometric embedding by the formula \eqref{eq:pp_metric}. The image of the set $K$ is precompact by the Tychonoff Theorem. Indeed, each coordinate in the image does not exceed the maximum of the respective function times the number of particles in its support. The latter is bounded by the requirement of the proposition. Last, the preimage of a precompact subset under an isometric embedding is a precompact set.
\end{proof}

\begin{proof}[Proof of Proposition \ref{pp:tightness}]
Fix $\varepsilon>0$. For $k\in \N$, choose a number $N_k$ such that $$\P_\alpha(\#_{[-k, k]}> N_k)<2^{-k-1}\varepsilon$$ for any $\alpha$. This is possible by the Prokhorov Theorem. Introduce the set
\[
K_\varepsilon = \underset{j\in\N}{\bigcap}\{X\in\Conf(\R): \#_{[-k, k]}(X)\le N_k\},
\]
which is compact by Proposition \ref{pp:compact_crit}. The probability of its complement is
\[
\P_\alpha(K_\varepsilon^c) \le \sum_{j\in\N}\P_\alpha(\#_{[-k, k]}> N_k) \le \varepsilon.
\]
This completes the proof.
\end{proof}

\section{Proof of Corollary \ref{res:KS_conv}}\label{sec:KS_conv}

Corollary \ref{res:KS_conv} is a direct consequence of Theorem \ref{res:cbe_th} and the classical Feller smoothing estimate, which we recall here for convenience.

\begin{theorem}[{\cite[p. 538]{F_66}}]\label{last:feller_est}
Let $F_1$ and $F_2$ be the distribution functions of centered, real-valued random variables, with corresponding characteristic functions $\varphi_1$ and $\varphi_2$. Suppose $F_2$ is continuously differentiable and its density is bounded, with
\[
m = \sup_x\abs{F'(x)}<\infty.
\]
Then, for any $T > 0$, the following bound on the Kolmogorov-Smirnov distance holds:
\[
\sup_{x\in \R}\abs{F_1(x)-F_2(x)} \le \frac{24m}{\pi T} +\frac{1}{\pi}\int_{-T}^T\abs*{\frac{\varphi_1(y)-\varphi_2(y)}{y}}dy.
\]
\end{theorem}
\begin{proof}[Proof of Theorem \ref{res:KS_conv}]
We bound the Kolmogorov-Smirnov distance between the distribution of the random variable $\overline{S}_{f(\cdot/R)}$ and a standard normal distribution. To apply Theorem \ref{last:feller_est}, we identify $F_1$ with the distribution function $F_R$ of $\overline{S}_{f(\cdot/R)}$ and $F_2$ with the standard normal distribution function $F_\mathcal{N}$. Let $\varphi_R$, $\varphi_\mathcal{N}$ be the characteristic functions of the former and the latter respectively.

Theorem \ref{res:cbe_th} provides the following estimate: there exists a constant $K>0$ such that
\[
\abs*{\frac{\varphi_R(k)-\varphi_{\mathcal{N}}(k)}{k}} \le \frac{K}{R}e^{K\abs{k}^2}\abs{k},
\]
Note that the factor $1/R$ arises from the scaling properties of the seminorm, specifically $$\normHR{1}{f(\cdot/R)} = R^{-1/2} \normHR{1}{f(\cdot)}.$$

Using this estimate, we deduce the bound on the integral term in Feller's inequality
\[
\int_{-T}^T\abs*{\frac{\varphi_R(k)-\varphi_{\mathcal{N}}(k)}{k}}dk \le \frac{2KT^2}{R}e^{KT^2}.
\]
Therefore, the full bound is
\[
\sup_x\abs{F_R(x)-F_\mathcal{N}(x)} \le \frac{C_1}{R}e^{KT^2} + \frac{C_2}{T}.
\]
The statement of Corollary \ref{res:KS_conv} follows by choosing $T$ to balance these two terms.
\end{proof}
\section{Concluding remarks}
\subsection{Jack measures}\label{SS_intro:jack}
The simplest yet interesting example of a Jack measure is the Jack-Plancherel measure on partitions of a fixed size $n$ (see \cite[Formula (2.2)]{M_08}). For $\alpha>0$, the probability of a partition $\lambda$ is given by
\[
\mathcal{M}_{PL, n}^{\alpha}(\lambda) = \frac{\abs{\lambda}!\alpha^{\abs{\lambda}}}{H(\lambda, \alpha)H'(\lambda, \alpha)},
\]
where
\[
H(\lambda, \alpha) = \prod_{(i, j)\in \lambda}((\lambda_i-j)+(\lambda'_j-i)\alpha +1), \quad H'(\lambda, \alpha) = \prod_{(i, j)\in \lambda}((\lambda_i-j)+(\lambda'_j-i)\alpha +\alpha),\quad \theta >0.
\]
For $\alpha=1$, this coincides with the Plancherel measure on the set of irreducible representations (indexed by partitions) of the symmetric group. For general $\alpha>0$, it can be obtained as a degeneration of the $z$-measures introduced by Borodin and Olshanski \cite{BO_05} (the limit $z, z'\to\infty$ in \cite[(1.1)]{BO_05}).

A remarkable property of the Plancherel measures is the determinantal structure of their Poissonizations
\[
\mathcal{M}_{PL, q}^\alpha(\lambda) = e^{-q^2}\frac{q^{2\abs{\lambda}}}{\abs{\lambda}!}\mathcal{M}_{PL, \abs{\lambda}}^\alpha(\lambda), \quad q>0
\]
under the embedding
\begin{equation}\label{intro_eq:diagram_emb_s}
\Y\to\Conf(\Z),\quad \lambda\mapsto \{\lambda_j-j\}_{j\in\N}\in\Conf(\Z).
\end{equation}
The observation that a random Young diagram, after a 45-degree rotation, yields a determinantal point process is due to Okounkov \cite{O_01}. His result actually applies to a larger class of measures, which we now introduce.

For arbitrary specializations $\rho_1, \rho_2$ such that the series
\[
Z = \sum_{\lambda\in\Y}s_\lambda(\rho_1)s_\lambda(\rho_2)
\]
converges absolutely, the Schur measure is defined by
\[
\mathcal{M}^1_{\rho_1, \rho_2}(\lambda) = Z^{-1}s_\lambda(\rho_1)s_\lambda(\rho_2).
\]
The measure defined this way may, in general, be complex-valued. It is a probability measure if, for example, $\rho_1$ is the complex conjugate of $\rho_2$, or if both specializations are Schur-positive (the latter are classified by the Edrei-Thoma Theorem, see \cite[Corollary 4.2, Proposition 4.4]{BO_17}).

Returning to Okounkov's work \cite{O_01}, it is shown that a Schur measure is determinantal for arbitrary specialization \cite[Theorem 1]{O_01}, \cite[Section 3]{J_01} under the embedding \eqref{intro_eq:diagram_emb_s}. To recover the Plancherel measures, one takes the specialization
\begin{equation}\label{intro_eq:pl_spec}
p_1(\rho_1)=p_1(\rho_2)=q, \quad p_j(\rho_1)=p_j(\rho_2)=0, \text{ for }j\ge 2.
\end{equation}

Following Moll \cite[Definition 1.2.1]{M_23} and Dimitrov, Gao, Gu, Niedernhofer \cite[Definition 2.5]{DGGN_25}, we define the Jack measure for specializations $\rho_1$, $\rho_2$ as
\begin{equation}\label{intro_eq:jack_def}
\mathcal{M}^\alpha_{\rho_1, \rho_2}(\lambda) = Z^{-1}\frac{J^{(\alpha)}_\lambda(\rho_1)J^{(\alpha)}_\lambda(\rho_2)}{\alsc{\J{\lambda}, \J{\lambda}}}, \quad Z = \sum_{\lambda\in\Y}\frac{J^{(\alpha)}_\lambda(\rho_1)J^{(\alpha)}_\lambda(\rho_2)}{\alsc{\J{\lambda}, \J{\lambda}}}.
\end{equation}
For $\alpha=1$, this recovers Schur measures. For general $\alpha>0$, Poissonized Jack-Plancherel measure $\mathcal{M}_{PL, q}^{\alpha}$ arises from the Jack measures through the same specialization \eqref{intro_eq:pl_spec} (see \cite[Formula (10.29)]{M_79} and \cite[Formula (4.2)]{JM_15}).

For a general $\alpha$, we consider the embedding
\begin{equation}\label{intro_eq:diagram_emb}
\Y\to\Conf(\R), \quad \lambda\mapsto \{\lambda_j-\alpha j\}.
\end{equation}
Strahov \cite{S_08, S_10} showed that Jack-Plancherel measures and, more generally,$z$-measures for parameters $\alpha=2, 1/2$ are Pfaffian point processes under the embedding \eqref{intro_eq:diagram_emb}. Recall that for the corresponding parameters $\beta=1, 4$, the circular $\beta$-ensemble is also a Pfaffian point process. This observation motivates the following question, inspired by Theorem \ref{intro:gessel}.

\begin{question}
For which specializations $\rho_1, \rho_2$ are the Jack measures corresponding to $\alpha=2, 1/2$ Pfaffian point processes under the embedding \eqref{intro_eq:diagram_emb}?
\end{question}

Another interesting question concerns the behavior of Jack measures under the microscopic scaling limit for the respective specialization. For a smooth, compactly supported, real-valued function $f$, consider the specializations
\[
p_j(\rho_1^n) = \overline{p_j(\rho_2^n)} = \alpha j\widehat{f(n\cdot)}_j,
\]
where, for sufficiently large $n$, the function $f(n\theta)$ can be regarded as a function on the unit circle. We rescale the embedding \eqref{intro_eq:diagram_emb} as follows
\begin{equation}\label{intro_eq:diagram_scaling}
\{\lambda_j-\alpha j\}_{j\in\N}\mapsto \left\{\frac{\lambda_j-\alpha j}{n}\right\}_{j\in\N}.
\end{equation}
\begin{question}\label{quest:jack_conv}
Under the scaling \eqref{intro_eq:diagram_scaling}, do the Jack measures $\mathcal{M}^{\alpha}_{\rho_1^n, \rho_2^n}$ converge to a limiting point process?
\end{question}

In the case $\alpha=1$, corresponding to the Schur measures, the answer is affirmative. This follows from the calculations by Bufetov \cite{B_24}. The limiting process is determinantal; its kernel is given by a product of two Hankel operators. The Fredholm determinant of this kernel describes the convergence of additive functionals under the sine process.

There is a substantial body of work on the limiting behavior of $z$-measures and Jack measures. Borodin and Olshanski \cite{BO_05} studied the limit of $z$-measures on the boundary of the Young graph. The asymptotic shape of a large partition under the Jack-Plancherel measure, as well as Gaussian fluctuations, were investigated by Dolega and F\'eray \cite{DF_16, DS_19}. Matsumoto \cite{M_08} established a connection between large partitions under the Jack measure and the traceless Gaussian $\beta$-ensemble. Moll \cite{M_23} derived the limit shape of a large partition under the Jack measure for general specializations. Finally, the asymptotic behavior of Jack measures with varying homogeneous specializations was considered by Dimitrov, Gao, Gu, and Niedernhofer \cite{DGGN_25}.

\subsection{From the fundamental equation of Johansson to the Dyson Brownian motion and Jack polynomials}
This section explains the connection between the Jack-polynomial approach and a fundamental equation introduced by Johansson. We begin by presenting the equation exactly as it appears in \cite[Lemma 2.1]{L_21}.
\begin{proposition}
For any functions $w\in C^1(\mathbb{T})$ and $g\in C^1(\mathbb{T})$, the following identity holds:
\[
\E_\beta^n\left\{\prod_{j=1}^n e^{w(\theta_j)}\left[\frac{\beta}{2}\sum_{p, q=1}^n\frac{g(\theta_p)-g(\theta_q)}{2\tan\left(\frac{\theta_p-\theta_q}{2}\right)}+\left(1-\frac{\beta}{2}\right)\sum_{j=1}^ng'(\theta_j)+\sum_{j=1}^ng(\theta_j)w'(\theta_j)\right]\right\}=0.
\]
\end{proposition}
This identity naturally leads to the introduction of a certain differential operator. Substituting $g = w'$ and noting that the diagonal terms of the first sum contribute exactly the second sum, one obtains
\[
\E_\beta^n\mathcal{H}_\beta^n\prod_{j=1}^n e^{w(\theta_j)},
\]
where $\mathcal{H}_\beta^n$ acts on symmetric functions on $\mathbb{T}^n$ as
\[
\mathcal{H}_\beta^n = \sum_{j=1}^n\partial_{\theta_j}^2 + \frac{\beta}{2}\sum_{1\le p< q\le n}\frac{\partial_{\theta_p}-\partial_{\theta_q}}{\tan\left(\frac{\theta_p-\theta_q}{2}\right)}.
\]
In particular, the circular $\beta$-ensemble is invariant under the dynamics generated by $\mathcal{H}_\beta^n$. The operator $\mathcal{H}_\beta^n$ is the generator of the circular \textit{Dyson Brownian motion} \cite[Definition 9, Remark 11]{W_16}:
\[
d\theta_j(t) = \sqrt{2}dW_j(t) + \frac{\beta}{2}\sum_{k\ne j}\cot\left(\frac{\theta_j-\theta_k}{2}\right)dt, \quad j=1,\ldots,n.
\]
Moreover, this Brownian motion is reversible: the operator $\mathcal{H}_\beta^n$ is self-adjoint in $L^2(\P_\beta^n)$ (with an appropriate domain), i.e.,
\[
\E_\beta^n\mathcal{H}_\beta^nh_1\overline{h_2}=\E_\beta^nh_1\mathcal{H}_\beta^n\overline{h_2}.
\]
Consequently, its eigenfunctions are orthogonal with respect to the circular $\beta$-ensemble.

In spite of Proposition \ref{Jack:orth}, it is natural to expect that the Jack polynomials provide the desired eigenfunctions. Rewriting $\mathcal{H}_\beta^n$ in the variables $z_j = e^{i\theta_j}$ gives
\[
\mathcal{H}_\beta^n = -\sum_{j=1}^n(z_j\partial_{z_j})^2 - \frac{\beta}{2}\sum_{1\le p<q\le n}\frac{z_p+z_q}{z_p-z_q}(z_p\partial_{z_p}-z_q\partial_{z_q}),
\]
so that finding eigenfunctions reduces to an algebraic problem: the operator preserves the degree of symmetric polynomials. The precise result is due to Stanley \cite[Theorem 3.1]{S_89}.

\appendix
\section{Definition of the sine-$\beta$ process}\label{a:sine_def}
The goal of this section is to recall the definition of the sine-$\beta$ process.
Following \cite{KS_09}, it can be constructed as the preimage of a randomly shifted
lattice $\psi^{-1}(\omega + 2\pi\mathbb{Z})$ under a random monotone increasing function
$\psi$. We explain below that this construction indeed produces a Borel probability
measure on the space of configurations.

\subsection{Construction of point processes via random monotone increasing functions}
Recall that the space of configurations $\Conf(\mathbb{R})$ consists of locally finite
subsets of $\mathbb{R}$ with multiplicities. We equip it with the vague topology
(see Section~\ref{sec:sine_def}). For a configuration $X$ and a Borel set
$A \subset \mathbb{R}$, we write
\[
\#_A(X) = |A \cap X|.
\]

The Borel $\sigma$-algebra on $\Conf(\mathbb{R})$ is generated by the cylinder sets
\[
C_{k,a,b} = \{X \in \Conf(\mathbb{R}) : \#_{(a,b)}(X) = k\},
\qquad a,b \in \mathbb{R},\; a<b,\; k\in\mathbb{Z}_{\ge0}.
\]

Let $M(\mathbb{R})$ denote the space of monotone increasing functions on
$\mathbb{R}$, endowed with the cylinder $\sigma$-algebra. For $u\in M(\mathbb{R})$
let $c(u) \subset \mathbb{R}$ denote the set of its critical values, i.e. values
with more than one preimage. For a monotone function this set is at most countable.

Define
\[
M_{\mathrm{reg}}
=
\{(u,\omega)\in M(\mathbb{R})\times[0,2\pi]:
c(u)\cap(2\pi\mathbb{Z}+\omega)=\varnothing\},
\]
and equip this subset with the product $\sigma$-algebra.
For a probability measure $\mathcal{P}$ on $M(\mathbb{R})$ define the product measure
\[
\tilde{\mathcal P}=\mathcal{P}\otimes \frac{1}{2\pi}\mathrm{Leb}
\]
on $M(\mathbb{R})\times[0,2\pi]$.
\begin{proposition}
For any probability measure $\mathcal{P}$ on $M(\mathbb{R})$, the set
$M_{\mathrm{reg}}$ has full $\tilde{\mathcal P}$-measure.
\end{proposition}
\begin{proof}
We first show that $M_{\mathrm{reg}}$ is measurable. Write
\[
M_{\mathrm{reg}}=\bigcap_{k\in\mathbb{Z}} A_k,
\qquad
A_k=\{(u,\omega):\omega+2\pi k\notin c(u)\}.
\]

Each set $A_k$ is measurable. Indeed, if $x$ is a critical value of $u$,
then there exist at least two rational points for which $u$ attains the value $x$, since $u$ is monotone. Hence
\[
A_k^c
=
\bigcup_{q_1,q_2\in\mathbb{Q}}
B_{q_1,q_2},
\qquad
B_{q_1,q_2}
=
\{(u,\omega):u(q_1)=u(q_2)=\omega+2\pi k\}.
\]

Each set $B_{q_1,q_2}$ is the preimage of $(0,0)$ under the map
\[
(u,\omega)
\mapsto
\big(u(q_1)-\omega-2\pi k,\;u(q_2)-\omega-2\pi k\big),
\]
which is continuous in the product topology and therefore measurable.
Thus $M_{\mathrm{reg}}$ is measurable.

To show that $M_{\mathrm{reg}}$ has full measure, compute
\[
\tilde{\mathcal P}(M_{\mathrm{reg}}^c)
=
\frac{1}{2\pi}
\int_{M(\mathbb{R})}
\mathrm{Leb}\big(
\{\omega\in[0,2\pi]:(2\pi\mathbb{Z}+\omega)\cap c(u)\neq\varnothing\}
\big)
\,d\mathcal{P}(u).
\]
Since $c(u)$ is countable, the set inside the Lebesgue measure is countable,
and therefore has zero measure. This completes the proof.
\end{proof}

Define the map
\[
\Theta:M_{\mathrm{reg}}\to\Conf(\mathbb{R}),
\qquad
(u,\omega)\mapsto u^{-1}(\omega+2\pi\mathbb{Z}),
\]
where a point $x$ is included in $u^{-1}(y)$ if the function $u$
jumps through the value $y$, i.e.
\[
u_-(x)\le y\le u_+(x).
\]

\begin{proposition}
The map
\[
\Theta:M_{\mathrm{reg}}
\to
\Conf(\mathbb R)
\]
is measurable.
\end{proposition}

\begin{proof}
It suffices to show that, for any interval $(a,b)$, the function
\[
(u, \omega)\mapsto \abs{(a, b)\cap u^{-1}(\omega + 2\pi \Z)}
\]
is measurable.
This follows once we verify that each $u^{-1}(\omega+2\pi k)$ is measurable as a function of $(u, \omega)$.

The desired measurability follows from the union representation
\[
\{u^{-1}(\omega+2\pi k)\in(a,b)\}
=
\bigcup_{n\in\mathbb{N}}
\{(u,\omega):u(a+1/n)\le\omega+2\pi k\le u(b-1/n)\},
\]
where each set in this union is measurable. The claim follows.
\end{proof}

\begin{definition}\label{KSC:pp_def}
Let $\mathcal{P}$ be a probability measure on $M(\mathbb{R})$.
The associated point process is defined as the pushforward
\[
\P_{\mathcal{P}} := \Theta_*\tilde{\mathcal{P}}.
\]
In particular, finite-dimensional distributions of a stochastic process
$X_x$ define a point process whenever $X_a\le X_b$ almost surely for
$a\le b$.
\end{definition}
\subsection{Definition of the sine-$\beta$ process}
Let $B_1$ and $B_2$ be independent standard Brownian motions.
For $x\in\mathbb{R}$, consider the stochastic differential equation
\begin{equation}\label{KSC_eq:sde_def}
dX_x^\beta(t)
=
x\,dt
+
\frac{2}{\sqrt{\beta t}}
\Im\Big\{
\big(e^{iX_x^\beta(t)}-1\big)
\big(dB_1(t)+i\,dB_2(t)\big)
\Big\}.
\end{equation}
\begin{theorem}[{\cite[Proposition 4.5]{KS_09}}]\label{KSC:SDE_sol}
For each $x\in\mathbb{R}$ there exists a unique strong solution
$X_x^\beta$ on $(0,\infty)$ satisfying

\begin{itemize}
\item If $x>y$ then $X_x^\beta(t)-X_y^\beta(t)$ is almost surely
a nonnegative function of $t$.

\item The expectation at $t\in (0, +\infty)$ is
\[
\E X_x^\beta(t)=xt .
\]
\end{itemize}
\end{theorem}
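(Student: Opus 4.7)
The plan is to solve \eqref{KSC_eq:sde_def} on intervals $[s, \infty)$ with $s > 0$ by classical It\^o--Lipschitz theory, verify the two properties there, and then extend to the full half-line by a limiting procedure as $s \downarrow 0$. First I would rewrite the equation in real form. Since $e^{iX}-1 = (\cos X - 1) + i \sin X$, the imaginary part in \eqref{KSC_eq:sde_def} is $\sin(X)\, dB_1 + (\cos(X)-1)\, dB_2$, so that
\[
  dX_x^\beta(t) = x\, dt + \frac{2}{\sqrt{\beta t}}\bigl[\sin(X_x^\beta(t))\, dB_1(t) + (\cos(X_x^\beta(t)) - 1)\, dB_2(t)\bigr].
\]
On any $[s, T] \subset (0, \infty)$ the diffusion coefficients are uniformly bounded and globally Lipschitz in the state variable (both by $4/\sqrt{\beta s}$), so the classical It\^o theorem yields, for any prescribed initial value at time $s$, a unique strong solution on $[s, \infty)$.

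On $[s, \infty)$ the two stated properties follow from standard tools. For monotonicity, couple two solutions $X_x^\beta$ and $X_y^\beta$ (with $x > y$) by driving them with the same pair of Brownian motions and starting them from the same value at $s$; since their diffusion coefficients coincide as functions of the state and their drifts are the constants $x > y$, the Ikeda--Watanabe pathwise comparison theorem yields $X_x^\beta(t) \ge X_y^\beta(t)$ almost surely for all $t \ge s$. For the mean identity, the uniform boundedness of the integrands makes the stochastic integrals true $L^2$-martingales, so taking expectations gives $\E X_x^\beta(t) = \E X_x^\beta(s) + x(t-s)$.

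The main obstacle is the extension of the solution through the singularity at $t = 0$, and the canonical selection of the entrance law that produces the mean identity $\E X_x^\beta(t) = xt$. My plan is to approximate by solving on $[s_n, \infty)$ along a sequence $s_n \downarrow 0$ under the initial condition $X_x^\beta(s_n) = x s_n$ dictated by the target mean, and to prove that the approximants converge on every compact sub-interval of $(0, \infty)$. A direct $L^2$-estimate between two approximants does not close because the quadratic variation of the martingale part on $(0, T]$ diverges logarithmically as $s_n \downarrow 0$. A more convenient framework is the time change $\tau = \log t$: using that $\int_1^{e^\tau} dB(u)/\sqrt{u}$ is a standard Brownian motion in $\tau$, the equation transforms into
\[
  dY = x e^\tau\, d\tau + \frac{2}{\sqrt{\beta}}\bigl[\sin(Y)\, d\tilde B_1(\tau) + (\cos(Y) - 1)\, d\tilde B_2(\tau)\bigr], \quad Y(\tau) = X_x^\beta(e^\tau),
\]
in which the drift is integrable at $\tau = -\infty$ and the diffusion coefficients are bounded and globally Lipschitz on $\R$. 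Existence of a canonical entrance law at $\tau = -\infty$ is then a standard forgetting-of-initial-condition argument for asymptotically autonomous Lipschitz SDEs, which I would implement via a coupling Gronwall estimate between two approximants $Y^{(n)}$ and $Y^{(m)}$. Once the limit is constructed, uniqueness on $(0, \infty)$ is inherited from uniqueness on each $[s, \infty)$, and both monotonicity and the mean formula $\E X_x^\beta(t) = xt$ (using $\E X_x^\beta(s_n) = x s_n$ at the boundary) pass to the limit $s_n \downarrow 0$ from the results of the previous paragraph.
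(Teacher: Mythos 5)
The paper does not prove this statement at all: it is quoted verbatim from Killip--Stoiciu \cite[Proposition 4.5]{KS_09}, so your proposal has to be measured against the argument there rather than against anything in this text. Your reduction to real form is correct, the existence and uniqueness on each $[s,\infty)$ is fine, and the comparison and martingale arguments on $[s,\infty)$ are sound (for the comparison step with two driving Brownian motions one checks that the local time at $0$ of $X^\beta_x-X^\beta_y$ vanishes, using that the diffusion coefficients are Lipschitz on $[s,\infty)$; this is routine).

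The genuine gap is the passage $s_n\downarrow 0$, which is the entire content of the cited proposition. Your ``forgetting of initial condition'' mechanism does not operate here: coupling two approximants $Y^{(n)},Y^{(m)}$ through the same Brownian motions, their difference $Z=Y^{(n)}-Y^{(m)}$ has no drift, so $\E Z^2(\tau)=\E Z^2(\tau_n)+\tfrac{4}{\beta}\int_{\tau_n}^\tau\E\bigl[2-2\cos Z\bigr]\,du\ \ge\ \E Z^2(\tau_n)$ is non-decreasing, and Gronwall only gives the useless upper bound $\E Z^2(\tau)\le \E Z^2(\tau_n)e^{4(\tau-\tau_n)/\beta}$. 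There is no contraction, so everything hinges on showing that the initial discrepancy $\E Z^2(\tau_n)=\mathrm{Var}\bigl(Y^{(m)}(\tau_n)\bigr)$ tends to $0$; but the crude estimate $|e^{iY}-1|^2\le 4$ (which is what leads you to claim the quadratic variation diverges logarithmically) cannot give this, and the naive Gronwall bound $\mathrm{Var}(Y(\tau))\le \tfrac{4x^2}{\beta}\int_{\tau_m}^\tau e^{2u}e^{4(\tau-u)/\beta}du$ actually blows up as $\tau_m\to-\infty$ precisely when $\beta\le 2$, the regime of this paper. The missing idea --- and the heart of Killip--Stoiciu's proof --- is the degeneracy of the noise at the entrance point: $|e^{iX}-1|\le |X|$, so for a solution satisfying $X(t)=O(t)$ the quadratic variation $\int_0^t\tfrac{4}{\beta u}|e^{iX(u)}-1|^2du\lesssim \int_0^t\tfrac{X(u)^2}{u}du\lesssim t^2$ is \emph{finite}, not logarithmically divergent. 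One must first establish an a priori bound of the form $\E\sup_{u\le t}|X(u)-xu|^2\le C(x,\beta)\,t^2$ near $t=0$ (by Picard iteration around $xt$, closing the Gronwall loop through the factor $|X|^2/u\lesssim u$), and only then do the Cauchy property of your approximants, the identification of the entrance law $X(0+)=0$, and the uniqueness on $(0,\infty)$ follow. Relatedly, as stated your ``uniqueness on $(0,\infty)$'' is not meaningful without fixing this boundary behaviour at $t=0$: every choice of initial value at every $s>0$ produces a strong solution on $(s,\infty)$.
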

\begin{definition}[{\cite[p.~6]{KS_09}}]\label{KSC:sb_def}
Let $\mathcal{P}_\beta$ denote the finite-dimensional distributions
of the process $X_x^\beta(1)$ as a function of $x$. The \emph{sine-$\beta$ process} is the
point process associated with $\mathcal{P}_\beta$:
\[
\P_\beta:=\P_{\mathcal{P}_\beta},
\]
in the sense of Definition~\ref{KSC:pp_def}.
\end{definition}

\end{document}